\setlist[enumerate]{listparindent=0.5in}
\crefname{figure}{Figure}{Figures}
  \def\clap#1{\hbox to 0pt{\hss#1\hss}}
\providecommand{\mat}[1]{\bm{#1}}%
\renewcommand{\vec}[1]{\mathbf{#1}}
\providecommand{\mA}{\ensuremath{\mat{A}}}
\providecommand{\mC}{\ensuremath{\mat{C}}}
\providecommand{\mE}{\ensuremath{\mat{E}}}
\providecommand{\mH}{\ensuremath{\mat{H}}}
\providecommand{\mI}{\ensuremath{\mat{I}}}
\providecommand{\mJ}{\ensuremath{\mat{J}}}
\providecommand{\mR}{\ensuremath{\mat{R}}}
\providecommand{\mU}{\ensuremath{\mat{U}}}
\providecommand{\mW}{\ensuremath{\mat{W}}}
\providecommand{\mX}{\ensuremath{\mat{X}}}
\providecommand{\va}{\ensuremath{\vec{a}}}
\providecommand{\vb}{\ensuremath{\vec{b}}}
\providecommand{\vf}{\ensuremath{\vec{f}}}
\providecommand{\vn}{\ensuremath{\vec{n}}}
\providecommand{\vs}{\ensuremath{\vec{s}}}
\providecommand{\vu}{\ensuremath{\vec{u}}}
\providecommand{\vw}{\ensuremath{\vec{w}}}
\providecommand{\vx}{\ensuremath{\vec{x}}}
\providecommand{\vy}{\ensuremath{\vec{y}}}
\date{\today}
\begin{document}
\title{\textbf{Embedded Ridge Approximations\footnote{\textcopyright\; 2020. This manuscript version is made available under the CC-BY-NC-ND 4.0 license \url{http://creativecommons.org/licenses/by-nc-nd/4.0/}} } }
\date{}

\newcommand{\norm}[2]{\ensuremath{\left\lVert#1\right\rVert_{#2}}}
\newcommand{\mb}[1]{\mathbf{#1}}
\newcommand{\myRe}{\mathbb{R}}
\newcommand{\myEx}{\mathbb{E}}
\newcommand{\V}{\text{Var}}
\linespread{1.0}
\let\oldnl\nl
\newcommand{\nonl}{\renewcommand{\nl}{\let\nl\oldnl}}
\newtheorem{theorem}{Theorem}[section]
\newtheorem{lemma}{Lemma}[section]
\newtheorem{definition}{Definition}[section]
\thispagestyle{plain}

\author{Chun Yui Wong\corref{cor1}%
\fnref{fn1}}
\ead{cyw28@cam.ac.uk}
\author{Pranay Seshadri\fnref{fn2}}

\author{Geoffrey T. Parks\fnref{fn3}}
\author{Mark Girolami \fnref{fn4}}

\cortext[cor1]{Corresponding author}
\fntext[fn1]{PhD student, Department of Engineering, University of Cambridge.}
\fntext[fn2]{Research Fellow, Department of Mathematics (Statistics Section), Imperial College London.}
\fntext[fn3]{Reader, Department of Engineering, University of Cambridge.}
\fntext[fn4]{Sir Kirby Laing Chair of Civil Engineering, Department of Engineering, University of Cambridge, and Strategic Director, The Alan Turing Institute.}

\begin{abstract}
Many quantities of interest (qois) arising from differential-equation-centric models can be resolved into functions of scalar fields. Examples of such qois include the lift over an airfoil or the displacement of a loaded structure; examples of corresponding fields are the static pressure field in a computational fluid dynamics solution, and the strain field in the finite element elasticity analysis. These scalar fields are evaluated at each node within a discretised computational domain. In certain scenarios, the field at a certain node is only weakly influenced by far-field perturbations; it is likely to be strongly governed by local perturbations, which in turn can be caused by uncertainties in the geometry. One can interpret this as a strong anisotropy of the field with respect to uncertainties in prescribed inputs. We exploit this notion of \emph{localised scalar-field influence} for approximating global qois, which often are integrals of certain field quantities. We formalise our ideas by assigning ridge approximations for the field at select nodes. This \emph{embedded ridge approximation} has favorable theoretical properties for approximating a global qoi in terms of the reduced number of computational evaluations required. Parallels are drawn between our proposed approach, active subspaces and vector-valued dimension reduction. Additionally, we study the \emph{ridge directions} of adjacent nodes and devise algorithms that can recover field quantities at selected nodes, when storing the ridge profiles at a subset of nodes---paving the way for novel reduced order modeling strategies. Our paper offers analytical and simulation-based examples that expose different facets of embedded ridge approximations.
\end{abstract}

\begin{keyword}
Ridge approximation\sep vector-valued functions\sep dimension reduction \sep active subspaces \sep minimum average variance estimation
\end{keyword}
\maketitle

\section{Introduction} \label{sec:intro}
The governing physics in many engineering problems is described by a system of partial differential equations (PDEs). These equations can be solved by suitable discretisation methods such as finite element and finite volume methods, where scalar fields---e.g.~pressures, temperatures, strains---are computed at each node over the PDE domain. One can interpret these scalar fields as vector-valued functions, conditioned upon certain boundary conditions and geometry parameters. Here each value of the output vector corresponds to the scalar field quantity at a specific node of the domain. Integrals of these scalar field variables typically constitute output qois in uncertainty quantification studies. For instance, when propagating uncertainties in the Mach number and angle of attack of flow over an airfoil, one is interested in quantifying the moments of the lift and drag coefficients \cite{witteveen2009uncertainty}; both lift and drag coefficients are surface integrals of the static pressure field around the airfoil \cite[Ch.~1]{anderson2010fundamentals}. In studying the impact of uncertainties in leakage flows in a compressor \cite{seshadri2015leakage}, one is interested in quantifying the moments of isentropic efficiency, which can be expressed as an integral of the pressure and temperature ratios \cite[p.~7]{denton1993loss}. Qois that are integrals of such scalar fields are prevalent well beyond computational fluid dynamics (CFD). For instance, the displacement of a structure is the integral of the strain field \cite[sec.~2.1]{sun2006mechanics}, which allows us to analyse linear elastic displacement problems in areas including soil mechanics \cite[sec.~6.4]{blatman2011adaptive} and machine component design \cite[sec.~5.2]{lam2020multifidelity} using the strain field.

Uncertainty quantification studies typically require a design of experiment, where the governing PDE model is evaluated under different inputs.  The number of times the model is evaluated depends on the dimension of the input space and the non-linearity of the scalar qois. In this paper, we explore a deviation from this paradigm. Rather than storing scalar qois for each model evaluation, we explore whether one can reduce the number of model evaluations if one stores select scalar fields. More specifically, we want to design emulators for the scalar fields themselves and then integrate the emulators to obtain the desired qois. 

So, why should we focus on scalar fields? Consider the following examples. Over an airfoil in subsonic flow, the static pressure of a point near the leading edge is unlikely to be strongly affected by small geometric perturbations far downstream. In a similar vein, the local deflection of a structure is unlikely to be affected by small local changes in elastic properties far away from the point of measurement. We refer to this property of physical scalar fields as \emph{localisation}---the output at each node is only influenced by perturbations local to the node. Mathematically, a parameterised scalar field can be expressed as a function $f(\vx, \vs)$, where $\vx$ denotes the perturbation parameters and $\vs$ the spatial location. We say that the field is localised if for each $\vs$, the scalar field depends mainly on a smaller subset of $\vx$ localised within a region\footnote{Localisation here refers to the fact that the physical perturbations influencing each node are contained spatially. This is in contrast with physical features that are localised spatially, such as shock waves and phase boundaries. The latter refer to a localised set of \emph{output nodes}, but the dependence of these nodal values on the input need not be localised.}. Provided that this smaller set of variables can be found, the \emph{curse of dimensionality} can be abated and the number of experiments required for approximating the field can be greatly reduced. A related scalar field property is \emph{smoothness}, which implies a certain degree of continuity in the variation of the field and its derivatives. Smoothness can be defined with respect to the parameters $\vx$ and/or the spatial location $\vs$. In this paper, we assume smoothness with respect to $\vx$, such that smooth functions can be used to form approximation models. Furthermore, we show that smoothness in $\vs$ can be leveraged to form compressed representations of scalar fields, facilitating the reconstruction of scalar fields from approximation.

To construct low-dimensional approximations of fields, we draw ideas from ridge functions \cite{pinkus2015ridge}. A function $f:\myRe^d \rightarrow \myRe$ whose variation is entirely contained within a subspace described by ran($\mW$), where $\mW\in \myRe^{d \times r}$ has orthonormal columns and $\text{ran}(\cdot)$ denotes the column space, is called a \emph{generalised ridge function} \cite{pinkus2015ridge}. That is, it can be expressed as
\begin{equation}
f(\vx) = g\left(\mW^T \vx \right).
\end{equation}
In this paper, we will refer to these functions simply as \emph{ridge functions} for brevity. Many physical qois are characterised by anisotropy in the input domain---i.e.~they vary strongly only within a subspace---such as the localised output components described in the previous paragraph. These qois can be well-approximated by ridge functions, and the process of finding these ridge functions is known as \emph{ridge approximation}. Namely, we find $\mW\in \myRe^{d \times r}$ with orthonormal columns and $g: \myRe^r \rightarrow \myRe$ such that
\begin{equation}
f(\vx) \approx g\left(\mW^T \vx \right),
\end{equation}
where the approximation can be formulated by minimising the mean squared error (MSE) over the input domain \cite[sec.~3.1]{constantine2014active}. When given a ridge approximation, significant computational run-time savings can be achieved by working with $g\left(\mW^T \vx \right)$ as an emulator for $f(\vx)$ if $r \ll d$, where we have effectively reduced the dimension of our problem from $d$ to $r$. We refer to the span of the columns of $\mW$ as \emph{ridge directions}, and $g$ as the \emph{ridge profile}. As $\mW$ takes the $d$-dimensional input to an $r$-dimensional projection, we also call the column space of $\mW$ the \emph{dimension-reducing subspace}. 

Numerous methods for ridge approximation have been proposed in the literature. Central to this paper are strategies based on analysis of the average outer product of the gradient with itself, which we will call the \emph{gradient covariance matrix} of $f$. Assuming that $f$ is Lipschitz continuous with bounded first derivatives, this matrix is defined as
\begin{equation} \label{equ:scalar_covariance}
\mC(f) = \int_{\mathcal{D}} \nabla f(\vx) \nabla f(\vx)^T \rho(\vx) d\vx = \myEx \left[\nabla f(\vx) \nabla f(\vx)^T \right] ,
\end{equation}
where $\mathcal{D}$ is the input domain, and $\rho: \myRe^d \rightarrow \myRe$ is a function that integrates to unity and is strictly positive within $\mathcal{D}$. We assume that all entries in this matrix are finite. Interpreting $\rho$ as a probability density function (PDF) over the input domain, we can replace the integral by the expectation over $\vx$, which we treat as a random variable\footnote{In the following, unless specified otherwise, all expectations are computed with respect to $\rho(\vx)$.}. Samarov \cite{samarov1993exploring} and Constantine et al.~\cite{constantine2015active,constantine2014active} analyse the eigendecomposition of this matrix and show that, if the variation of the function outside the span of eigenvectors corresponding to large eigenvalues is small, then a suitable choice for the ridge directions is the leading eigenvectors of the gradient covariance matrix. The subspace spanned by these leading eigenvectors is termed the \emph{active subspace} of $f$ by Constantine et al.~\cite{constantine2014active}. In a recent paper by Zahm and co-authors \cite{zahm2020gradient-based}, an extension to vector-valued functions is proposed. In \Cref{sec:Embedded_ridge_functions} we will explore the properties of their  \emph{vector gradient covariance matrix}. 

In practice, gradients of the qoi are often difficult to evaluate; for example, computational models may be based on legacy codes without automatic differentiation capabilities. In the absence of gradient information, other methods of ridge approximation have been proposed. Fornasier et al.~\cite{fornasier2012learning} and Tyagi and Cevher~\cite{tyagi2014learning} describe methods to recover low-dimensional ridge structures with finite differencing through compressed sensing and low-rank recovery, respectively. Hokanson and Constantine~\cite{hokanson2018data-driven} describe an algorithm to form a polynomial ridge approximation from fewer data than required for a full response surface by solving a non-linear least squares problem with \emph{variable projection} (VP) \cite{golub2003separable}. Constantine et al.~\cite{constantine2015computing} and Eftekhari et al.~\cite{eftekhari2017learning} describe methods to estimate the ridge directions based on finite differences. In addition, Glaws et al.~\cite{glaws2020inverse} draw an analogy between ridge recovery and \emph{sufficient dimension reduction} (SDR). In SDR, the goal is to find the minimal subspace, described by the column span of a matrix $\mW$, with which we can establish conditional independence between a set of covariates $\vx$ and the response $y$ in a regression setting. That is, we seek a subspace described by $\mW$ such that $y\perp \vx$ given $\mW^T \vx$. This \emph{central subspace} is intimately linked to the ridge directions \cite[thm.~2]{glaws2020inverse}, which implies that regression-based methods within the study of SDR can be applied to ridge approximation (see Section 2.2 in \cite{seshadri2019dimension} for further discussion). Examples of such methods include Sliced Inverse Regression (SIR) \cite{li1991sliced}, Sliced Average Variance Estimation (SAVE) \cite{cook1991sliced} and Minimum Average Variance Estimation (MAVE) \cite{xia2002adaptive}.  The former two techniques are based on inverse regression and the latter based on forward regression; given a set of predictor/response pairs $\{\vx_i, y_i\}_{i=1}^M$, forward regression aims to estimate statistics of the distribution of the response given covariates ($y|\vx$), while inverse regression aims to characterise $\vx|y$.

Given the ridge directions $\mW$, the ridge profile $g$ that minimises the mean squared approximation error $\myEx\left[\left(f(\vx) - \widetilde{g}\left(\mW^T \vx \right)\right)^2\right]$ over $\widetilde{g}$ can be shown analytically to be \cite{constantine2014active,zahm2020gradient-based}
\begin{equation} \label{equ:ridge_profile}
g\left(\mW^T \vx \right) = \myEx\left[f(\vx)\,|\,\mW^T \vx\right].
\end{equation}
Several approaches for approximating this average have been proposed, including the use of Gaussian processes \cite{seshadri2019dimension} and multivariate orthogonal polynomials \cite{glaws2019gausschristoffel}.

In this paper, we build upon these ideas and introduce an \emph{embedded ridge approximation} to connect the ridge approximation of desired qois with ridge approximations of their constituent scalar fields. Loosely stated, an embedded ridge approximation is based on vector-valued functions, where each component can be approximated by a ridge function---computed for scalar fields at select nodes within the computational domain. Leveraging gradient-free techniques, a surrogate model for the scalar field is constructed via ridge approximations at each node. When provided with certain structural assumptions about the field---including localisation---we show that it is possible to reduce the number of model evaluations for computing the dimension-reducing subspace of a related qoi, by exploiting these nodal ridge approximations.

The rest of the paper is structured as follows: in \Cref{sec:Embedded_ridge_functions}, we describe the concept of an \emph{embedded ridge function}, and provide an algorithm supported by theoretical analysis that leverages this embedded structure to find the dimension-reducing subspace of a scalar qoi based on an underlying spatial field. In \Cref{sec:storage}, we leverage the similarity between neighbouring output components to motivate a method of efficiently storing the array of ridge directions associated with each output component, and provide some algorithms for this process. In \Cref{sec:examples}, we use analytical and numerical examples to illustrate the algorithms proposed in this paper. 

\emph{Notation.} We denote the approximation of a quantity with a hat. For instance, a finite-sample estimate of a matrix $\mH$ is denoted as $\widehat{\mH}$ and a response surface for $g$ fitted with finitely many samples is denoted $\widehat{g}$. A general perturbation of a quantity is denoted with a tilde. For instance, a perturbation of $\vw$ is denoted $\widetilde{\vw}$.

 \section{Embedded ridge approximation} \label{sec:Embedded_ridge_functions}
Consider the scalar field $f(\vx,\vs)$ where $\vx \in \mathcal{D} \subset \myRe^d$ parameterises our model of interest and $\vs \in \myRe^K$ is a variable that denotes the spatial location in $K$-dimensional space. We place the following assumptions on this field:
\begin{enumerate}
\item the input space $\mathcal{D}$ is endowed with a probability density $\rho(\vx)$;
\item the field is square integrable with respect to the probability density $\rho(\vx)$ and Lipschitz continuous with bounded and square integrable first partial derivatives with respect to $\vx$ for all $\vs$.
\end{enumerate}
An example of $f(\vx,\vs)$ is the pressure within a computational domain---characterised by geometry parameters or boundary conditions $\vx$ and a probe location $\vs$. Our goal is to study dimension-reducing subspaces induced by the function
\begin{equation}
h(\vx) = \int_\mathcal{D} \omega(\vs) f(\vx,\vs)\; \text{d}\vs,
\end{equation}
where $\omega: \myRe^K \rightarrow \myRe$ is a weight function. In other words, $h(\vx)$ represents a weighted average of the scalar field $f(\vx,\vs)$, and is the relevant qoi. If $f(\vx,\vs)$ is the pressure distribution, then one can think of $h(\vx)$ as the lift coefficient or drag coefficient, for instance. Assuming that the partial derivative of the integrand is bounded independent of $\vx$ and $\vs$, we can write
\begin{equation}
\nabla_{\vx} h(\vx) = \int_\mathcal{D} \omega(\vs) \nabla_\vx f(\vx,\vs) \; \text{d}\vs.
\end{equation}
Now, let us assume that $f(\vx,\vs)$ can be approximated by a ridge function of the form
\begin{equation} \label{equ:comp_ridge_approx}
f(\vx,\vs) \approx g_{\vs} \left(\mW_{\vs}^T \vx\right),
\end{equation}
where $\mW_{\vs} \in \myRe^{d\times r_{\vs}}$, where $r_{\vs}$ identifies the dimension of the subspace spanned by the orthonormal columns of $\mW_{\vs}$. Note that $g_{\vs}$, $\mW_{\vs}$ and $r_{\vs}$ will, in general, depend on $\vs$. The gradient of $f$ is then given by
\begin{equation}
\nabla_{\vx} f(\vx,\vs) \approx \frac{\partial g_{\vs}\left(\mW_{\vs}^T \vx\right)}{\partial \vx} = \mW_{\vs} \nabla g_{\vs}\left(\mW_{\vs}^T \vx\right),
\label{equ:grad_f}
\end{equation}
noting that $\nabla g_{\vs}\left(\mW_{\vs}^T \vx\right) := \text{d}g_{\vs}(\vu)/\text{d}\vu|_{\vu = \mW_{\vs}^T \vx}$. Thus,
\begin{equation}
\nabla_{\vx} h(\vx) \approx \int_\mathcal{D} \omega({\vs}) \mW_{\vs} \nabla g_{\vs}\left(\mW_{\vs}^T \vx\right) \text{d} {\vs}.
\end{equation}
In practice, we can approximate this gradient with an $N$-point quadrature rule $\left\{ {\vs}_{i},\omega_{i}\right\} _{i=1}^{N}$, with quadrature points ${\vs}_i$ and quadrature weights $\omega_i$, from which we arrive at the approximation
\begin{align} \label{eqn:approx_gradients}
\begin{split}
\nabla_{\vx} h(\vx) & \approx \sum_{i=1}^N \omega_i \mW_{{\vs}_i} \nabla g_{{\vs}_i} \left(\mW_{{\vs}_i}^T \vx\right), \\
& = \sum_{i=1}^N \omega_i \mW_i \nabla g_i\left(\mW_i^T \vx\right),
\end{split}
\end{align}
where we have expressed $g_{{\vs}_i}$ as $g_i$ and $\mW_{{\vs}_i}$ as $\mW_{i}$ for notational convenience. Then, from \eqref{equ:scalar_covariance} we can approximate the scalar covariance matrix of $h(\vx)$ as
\begin{align} \label{equ:Ch}
\begin{split}
\mC(h) &:= \myEx\left[\nabla_{\vx} h (\vx)\nabla_{\vx} h(\vx)^T \right]\\
&\approx \myEx\left[\left(\sum_{i=1}^N \omega_i \mW_i \nabla g_i\left(\mW_i^T \vx\right)\right) \left(\sum_{j=1}^N \omega_j \mW_j \nabla g_j\left(\mW_j^T \vx\right)\right)^T\right]\\
&= \sum_{i=1}^N \sum_{j=1}^N \omega_i \omega_j \mW_i \myEx\left[\nabla g_i \nabla g_j^T\right] \mW_j^T.
\end{split}
\end{align}
The fact that \eqref{equ:Ch} is expressed in terms of the ridge parameters $\mW_i$ and $g_i$ is noteworthy. Given $\mC(h)$, we can find the dimension-reducing subspace of $h$ simply via an eigendecomposition, and this equation informs us that it is possible to shift the computational burden from evaluating the gradients $\nabla_\vx h$ to the estimation of the ridge parameters $\mW_i$ and $g_i$. In the absence of automatic differentiation or adjoint solvers, the former may require finite differences or the use of a surrogate model, whose cost of formulation suffers from the curse of dimensionality. Supposing that the scalar field $f(\vx, {\vs})$ is localised (see \Cref{sec:intro}), at a fixed location ${\vs}_i$ the dependence of $f(\vx,{\vs}_i)$ on $\vx$ is likely to be highly anisotropic, depending mainly on the parameters that impact nodes adjacent to ${\vs}_i$. This implies that the number of ridge directions at each location is likely to be small, and $g_i$ will be a low-dimensional function. For many methods of ridge approximation, this implies that the amount of simulation data required for this step is reduced for a given approximation accuracy. This brings us to the central idea of this paper: instead of directly calculating the dimension-reducing subspace of $h$, we leverage the decomposition of $h$ into its composite scalar field, whose dimension-reducing subspaces are likely to be inexpensive to compute. Using evaluations of the field, gradient-free strategies for ridge approximations---such as VP and MAVE, mentioned in \Cref{sec:intro}---can be used to furnish ridge approximations at each node of the scalar field. Then, from these component subspaces, we assemble the scalar gradient covariance of $h$, where the required gradients are calculated using approximations formed at the nodes via \eqref{eqn:approx_gradients}.

\subsection{Interpretation as vector-valued dimension reduction}
We assume that there exists a set of \emph{quadrature points} ${\vs}_i$ (domain nodes) to evaluate the scalar field $f(\vx,{\vs})$ that allows us to formulate the ridge approximation \eqref{equ:comp_ridge_approx} easily. In practice, this set of points is fixed by the computational domain and its associated mesh. We argue that reducing the dimensionality of the qoi $h$ is facilitated by the formulation \eqref{equ:Ch}. We can treat the scalar field evaluated at the prescribed positions as a vector-valued function $\vf(\vx)$, whose components $f_i(\vx) := f(\vx,{\vs}_i)$ exhibit ridge approximations. 
\begin{definition}[Embedded ridge function]
Let $\vf$ be a vector-valued function $\vf: \myRe^d \rightarrow \myRe^N$ with components $f_1(\vx), f_2(\vx),..., f_N(\vx)$, where each $f_i:\myRe^d \rightarrow \myRe$. Such a function $\vf$ is called an \emph{embedded ridge function} if it satisfies
\begin{equation} \label{embedded_ridge_defn}
\mb{f}(\mb{x}) = \begin{bmatrix}
f_1 (\mb{x})\\
\vdots \\
f_N (\mb{x})
\end{bmatrix}
=
\begin{bmatrix}
g_1 \left(\mW_1^T\mb{x}\right)\\
\vdots \\
g_N \left(\mW_N^T\mb{x}\right)
\end{bmatrix},
\end{equation}
where the $i$-th element of the vector, $f_{i} \left( \vx \right)$, is a ridge function of the form $g_{i} \left( \mW_{i}^{T} \vx \right)$, for $i=1, \ldots, N$.  Here all subspace matrices $\mW_{i}$ have the same number of rows $d$, but may have different numbers of columns $r_i$. It is assumed that the components of $\vx$ are independent under the input measure $\rho$. 
\end{definition}
An approximation of a vector-valued function using embedded ridge functions of the form \eqref{embedded_ridge_defn} is called an \emph{embedded ridge approximation}. There are parallels between an embedded ridge approximation and vector-valued dimension reduction. In \cite{zahm2020gradient-based}, the authors introduce a vector gradient covariance matrix, analogous to the scalar form given in \eqref{equ:scalar_covariance}.
\begin{definition}[Vector gradient covariance matrix] 
We define the \emph{vector gradient covariance matrix} $\mH(\vf) \in \myRe^{d\times d}$ as
\begin{equation} \label{vector_covariance}
\mH(\vf) = \myEx\left[\mJ (\vx) \mR \mJ(\mb{x})^T\right] = \int_{\mathcal{D}} \mJ (\vx) \mR \mJ(\mb{x})^T \rho(\mb{x}) d\mb{x},
\end{equation}
where $\mR\in \myRe^{N\times N}$ is a symmetric positive semi-definite matrix of weights, and
\begin{equation} \label{jacobian}
\mJ \left( \vx \right) = \left[ \frac{\partial f_1}{\partial \vx} , \ldots , \frac{\partial f_N}{\partial \vx} \right],
\end{equation}
where $\mJ \in \mathbb{R}^{d \times N}$ is the Jacobian matrix. 
\end{definition}
Observe that for an embedded ridge function, by setting 
\begin{equation}
\mR = \boldsymbol{\omega}\boldsymbol{\omega}^T, \; \; \; \text{where} \; \; \; \boldsymbol{\omega} = \left( \omega_1,\omega_2, \ldots ,\omega_N \right)^{T},
\end{equation}
we can recover the final line of \eqref{equ:Ch}. Thus, the embedded ridge approximation can be constructed by calculating the vector gradient covariance matrix of the discretised weighted scalar field. Note that, although this vector gradient covariance matrix coincides with the formulation in \cite{zahm2020gradient-based}, our focus is on a weighted average of the underlying field instead of the vector-valued function in itself. 
\subsection{Algorithm for ridge computation}
Equipped with a localised scalar field, we can design a procedure for the embedded ridge approximation of a qoi. Here our ridge function has the form
\begin{equation}
h(\vx) = \int_\mathcal{D} \omega(s) f(\vx,{\vs})\; \text{d}{\vs} \approx \boldsymbol{\omega}^T \vf(\vx).
\end{equation}
In \Cref{alg:embedded_ridge_comp}, we identify $N$ sets of ridge directions---one for every component of the vector $\vf$---assuming the absence of gradient information. We then fit ridge profiles $g_i$ to obtain nodal ridge approximations that we use for computing the scalar gradient covariance matrix for $h$.
\begin{algorithm}[h]
\caption{Embedded ridge function approximation.}
\label{alg:embedded_ridge_comp}
\begin{algorithm2e}[H]
\KwData{Input/output pairs $\left(\vx^{(m)}, \vf^{(m)}\right)_{m=1}^M$ with $\vf^{(m)} = \left[f_1\left(\vx^{(m)}\right),f_2\left(\vx^{(m)}\right),...,f_N\left(\vx^{(m)}\right)\right]^T$.}
\KwResult{Ridge profile $\widehat{g}_h$ and ridge directions $\widehat{\mU}$ such that $h(\vx) \approx \widehat{g}_h\left(\widehat{\mU}^T \vx\right)$ }
\For {$i = 1,...,N$}
{
Find $\widehat{\mW}_i$ with orthonormal columns using a gradient-free ridge approximation strategy.\footnote{Note that we can use the same set of input values for each component.}\\
Fit an approximate ridge profile $\widehat{g}_i$ using 
\begin{equation}
\left\{\left(\widehat{\mW}_i^T \vx^{(1)}, f_{i}^{(1)}\right), \ldots, \left(\widehat{\mW}_i^T \vx^{(M)}, f_{i}^{(M)}\right)\right\}
\end{equation}
as training data.\\
Evaluate $\nabla_{\vx} \widehat{f_i}\left(\vx^{(m)}\right)$ with \eqref{equ:grad_f} for $m = 1,...,M$.\\
}
Form $\widehat{\mJ}\left(\vx^{(m)}\right)$ from \eqref{jacobian}.\\
Calculate
\begin{equation} \label{equ:MC_Ch}
\widehat{\mC}(h) = \frac{1}{M} \sum_{m=1}^M \widehat{\mJ}\left(\vx^{(m)}\right) \mR \widehat{\mJ}\left(\vx^{(m)}\right)^T
\end{equation}
where 
\begin{equation}
\mR = \boldsymbol{\omega}\boldsymbol{\omega}^T.
\end{equation}
\\
Find the eigendecomposition of $\widehat{\mC}$ and choose the leading eigenvectors with the largest eigenvalues to form $\widehat{\mU}$.\\
Fit a low-dimensional ridge approximation $\widehat{g}_h$ using 
\begin{equation}
\left\{\left(\widehat{\mU}^T \vx^{(1)}, \boldsymbol{\omega}^T \vf^{(1)}\right), \ldots, \left(\widehat{\mU}^T \vx^{(M)}, \boldsymbol{\omega}^T \vf^{(M)}\right)\right\}
\end{equation}
as training data.
\end{algorithm2e}
\end{algorithm}

In the process of embedded ridge approximation, a ridge approximation for each node of the field is computed, exposing the rich structure endowed by the localisation of the field, thus enabling a reduction in the number of required training samples. These approximations can be used to form surrogate models of any qoi derived from the field, by replacing field evaluations by evaluations of the embedded ridges. However, we note that, in many situations, it is valuable to deduce a dimension-reducing subspace for the derived qoi, because the subspace facilitates many more tasks beyond surrogate modelling. Examples include optimisation \cite{gross2020optimisation,lukaczyk2014active}, visualisation \cite{cook1998regression} (if the subspace is one- or two-dimensional), sensitivity analysis \cite{wong2019extremum} and discovery of physical insights \cite{del_rosario2018dimension}. The ability to visualise the variation of the qoi is especially important in the design process to easily gauge whether an approximation model can be trusted. Thus, the subspace computed from step 7 onwards in \Cref{alg:embedded_ridge_comp} plays an important role in the embedded ridge approximation approach.

A localised scalar field contains nodes that are well-approximated with low-dimensional ridge subspaces. These subspaces are usually of lower dimensionality than the ridge subspace of derived qois, implying that each $\mW_i$ usually has fewer columns than $\mU$ in \Cref{alg:embedded_ridge_comp}. This gives the embedded ridge approximation approach an advantage---since low-dimensional ridge functions can be synthesised into a less easily found surrogate for qois. In what follows, we quantify this notion by studying the error on the gradient covariance matrix $\mC(h)$ via the estimate $\widehat{\mC}(h)$ computed in \eqref{equ:MC_Ch}. We establish a bound on the expected norm difference $\myEx\norm{\mC(h) - \widehat{\mC}(h)}{2}$ with the matrix Bernstein inequality \cite{tropp2012user-friendly}, where the matrix norm $\norm{\cdot}{2}$ yields the largest singular value of the argument. The accuracy of nodal ridge approximation is modelled  with a quantity $\eta_M$ dependent on $M$.\footnote{A better measure of this error can be defined using the more general subspace distance \eqref{equ:subspace_distance}, which is basis-agnostic. However, it can be shown, in a similar manner to \Cref{lem:inner_prod}, that a basis can be found such that a small subspace distance is equivalent to the present bound in this section.} For ease in exposition, we also assume that the component ridge dimension $r$ is constant in space, so $\mW_i \in \myRe^{d\times r}$ for all $i$.
\begin{theorem} \label{thm:sample_bound}
Assume that $\norm{\nabla g_i}{2} \leq L$ for all $1\leq i \leq N$,
\begin{equation} \label{equ:M_cond}
M \geq \frac{2L^2 \log(2r)}{\epsilon^2 \norm{\myEx\left[\nabla g_i \nabla g_j^T\right]}{2}},
\end{equation}
for all $1\leq i,j \leq N$, and 
\begin{equation*}
\norm{\widehat{\mW}_i - \mW_i}{2} \leq \eta_M.
\end{equation*}
Then,
\begin{equation*}
\myEx\norm{\widehat{\mC}(h) - \mC(h)}{2} \leq L^2 C \left(2\eta_M + \epsilon + \epsilon^2\right),
\end{equation*}
where $C := \sum_{ij} |\omega_i \omega_j|$.
\end{theorem}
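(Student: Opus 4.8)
The plan is to split the estimation error into two sources—the perturbation of the ridge directions $\widehat{\mW}_i$ away from $\mW_i$, and the Monte Carlo error incurred by replacing each expectation by an $M$-sample average—and to control them block-by-block in the representation
\begin{equation*}
\mC(h) = \sum_{i,j} \omega_i \omega_j \mW_i \myEx\left[\nabla g_i \nabla g_j^T\right] \mW_j^T
\end{equation*}
from \eqref{equ:Ch}. Writing $\mE_{ij} := \myEx[\nabla g_i \nabla g_j^T]$ and $\widehat{\mE}_{ij}$ for its $M$-sample estimate, and inserting the intermediate matrix $\sum_{i,j}\omega_i\omega_j \mW_i \widehat{\mE}_{ij}\mW_j^T$, the triangle inequality yields $\myEx\norm{\widehat{\mC}(h)-\mC(h)}{2} \leq \sum_{ij}|\omega_i\omega_j|\left(\myEx\norm{\widehat{\mE}_{ij}-\mE_{ij}}{2} + (\text{direction error})_{ij}\right)$, where the weights assemble into $C=\sum_{ij}|\omega_i\omega_j|$. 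It therefore suffices to show that each per-block sampling error is at most $L^2(\epsilon+\epsilon^2)$ and each per-block direction error is at most $2\eta_M L^2$.

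For the sampling term I would apply the expectation form of the matrix Bernstein inequality \cite{tropp2012user-friendly} to the mean-zero sum $\widehat{\mE}_{ij}-\mE_{ij} = \frac{1}{M}\sum_m \mX_m$, where the $\mX_m = \nabla g_i \nabla g_j^T - \mE_{ij}$ are independent $r\times r$ matrices. Because the off-diagonal blocks ($i\neq j$) are not symmetric, the inequality is invoked through the Hermitian dilation, whose ambient dimension is $2r$—this is the origin of the $\log(2r)$ factor in \eqref{equ:M_cond}. The uniform bound $\norm{\mX_m}{2}\leq 2L^2$ follows from $\norm{\nabla g_i}{2}\leq L$, and the matrix-variance proxy is controlled via $\myEx[\mX_m \mX_m^T]\preceq L^2\,\myEx[\nabla g_i \nabla g_i^T]$ together with the analogous bound for $\mX_m^T \mX_m$, so that $\sigma^2 \lesssim L^2\norm{\mE_{ij}}{2}/M$ after using $\norm{\myEx[\nabla g_i \nabla g_i^T]}{2}\leq L^2$. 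Substituting the sample-size condition \eqref{equ:M_cond} makes the sub-Gaussian term $\sqrt{2\sigma^2\log(2r)}$ collapse to order $\epsilon L^2$ and the sub-exponential term of order $L^2\log(2r)/M$ collapse to order $\epsilon^2 L^2$, which is exactly the $L^2(\epsilon+\epsilon^2)$ required.

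The direction error per block is the deterministic quantity $\norm{\widehat{\mW}_i\,\mE_{ij}\,\widehat{\mW}_j^T - \mW_i\,\mE_{ij}\,\mW_j^T}{2}$, which I would bound by a one-step telescoping argument: subtracting and adding $\mW_i\,\mE_{ij}\,\widehat{\mW}_j^T$ splits the difference into $(\widehat{\mW}_i-\mW_i)\mE_{ij}\widehat{\mW}_j^T$ and $\mW_i\mE_{ij}(\widehat{\mW}_j-\mW_j)^T$. Using $\norm{\widehat{\mW}_i-\mW_i}{2}\leq\eta_M$, $\norm{\mE_{ij}}{2}\leq L^2$, and the fact that matrices with orthonormal columns have unit spectral norm, each piece is at most $\eta_M L^2$, giving $2\eta_M L^2$. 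Summing the two per-block contributions against $|\omega_i\omega_j|$ then delivers the stated bound $L^2 C(2\eta_M+\epsilon+\epsilon^2)$.

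The main obstacle I anticipate is the matrix-variance bookkeeping in the Bernstein step: one must verify that $\sigma^2$ is genuinely governed by $\norm{\mE_{ij}}{2}$ rather than by the auto-covariance norms $\norm{\myEx[\nabla g_i \nabla g_i^T]}{2}$, so that the threshold \eqref{equ:M_cond}—which carries the cross-block norm in its denominator—yields precisely the $\epsilon$ scaling; the crude estimate $\norm{\myEx[\nabla g_i \nabla g_i^T]}{2}\leq L^2$ is what reconciles the two. A secondary subtlety is that the estimator actually evaluates $\nabla g_i$ at the perturbed arguments $\widehat{\mW}_i^T\vx$, so strictly the sampling and direction errors are entangled; the clean separation above relies on treating the inner profile argument as exact (equivalently, absorbing the residual into $\eta_M$ through Lipschitz continuity of $\nabla g_i$), and I would state this reduction explicitly rather than letting a gradient-Lipschitz constant enter the final bound.
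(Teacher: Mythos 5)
Your proposal follows essentially the same route as the paper's proof: decompose $\widehat{\mC}(h)-\mC(h)$ blockwise via the triangle inequality (with the weights summing to $C=\sum_{ij}|\omega_i\omega_j|$), telescope each block into two direction-perturbation terms bounded by $\eta_M L^2$ each plus a Monte Carlo term handled by the expectation form of the matrix Bernstein inequality \cite[Remark 6.5]{tropp2012user-friendly}, exactly as the paper does. Your explicit bookkeeping of the Hermitian dilation (the source of $\log(2r)$) and of the variance proxy---noting that the crude bound $\norm{\myEx[\nabla g_i\nabla g_i^T]}{2}\leq L^2$ is what reconciles the cross-block threshold \eqref{equ:M_cond} with the final $L^2(\epsilon+\epsilon^2)$ term---is, if anything, slightly more careful than the paper, which states the intermediate Bernstein bound directly as $(\epsilon+\epsilon^2)\norm{\myEx[\nabla g_i\nabla g_j^T]}{2}$ and, like you, implicitly treats the profile gradients $\nabla g_i$ as evaluated exactly.
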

\begin{proof}
See \Cref{sec:sample_bound_proof}.
\end{proof}

There are several important remarks to make regarding \eqref{equ:M_cond}. It should be clear that the number of samples $M$ required scales as a function of $r$ instead of $d$. This encapsulates the advantage brought about by considering the locality of the scalar field. In \Cref{sec:examples} we provide numerical studies that illustrate this. It is possible to derive a bound related to the subspace error of $\widehat{\mU}$ in \Cref{alg:embedded_ridge_comp} as a corollary to \Cref{thm:sample_bound}. This involves steps very similar to Lemma 3.9 and Corollary 3.10 in \cite{constantine2015active}, which are based on Corollary 8.1.11 in \cite{golub2013matrix}.
\section{Efficient storage of embedded ridge approximations: Ridge compression} \label{sec:storage}
Scalar field quantities are propagated through a PDE domain node-by-node. It is therefore very likely that neighbouring nodes---depending on the overall resolution of the mesh---will have similar values of these quantities. More specifically, one can think of these quantities as being strongly correlated with their neighbours---i.e.~the underlying scalar field is smooth \emph{spatially}. When approximating each component $f_i$ as a ridge function, this correlation can be interpreted as similarity in both the ridge directions $\mW_i$ and the ridge profile $g_i$. In this section, we assume that the number of ridge directions for each $\mW_i$ is constant, and equal to $r$, for simplicity.

\subsection{A perturbation bound on the mean squared error} \label{sec:storage_bound}
Given a ridge function $g(\mW^T \vx)$, consider a perturbation of the dimension-reducing subspace from $\mathcal{S} = \text{ran}(\mW)$ to $\widetilde{\mathcal{S}}$. This perturbation is quantified by the \emph{subspace distance}, defined as
\begin{equation} \label{equ:subspace_distance}
dist(\mathcal{S}_1, \mathcal{S}_2) = \norm{\mW_1\mW_1^T - \mW_2\mW_2^T}{2},
\end{equation}
for two subspaces $\mathcal{S}_1$ and $\mathcal{S}_2$ of $\myRe^{d}$. Here, $\mW_1$ and $\mW_2$ are two matrices with orthonormal columns such that $\mathcal{S}_1 = \text{ran}(\mW_1)$ and $\mathcal{S}_2 = \text{ran}(\mW_2)$ respectively. The goal is to characterise the error incurred by the perturbation in the dimension-reducing subspace. For a basis matrix $\widetilde{\mW}$ of $\widetilde{\mathcal{S}}$, we can form a Taylor expansion
\begin{equation}
g\left(\widetilde{\mW}^T \vx \right) = g\left(\mW^T \vx\right) + \left((\widetilde{\mW} - \mW)^T \vx\right)^T \underbrace{\nabla g(\vu)|_{\vu = \mW^T \vx}}_{\nabla_\vu g(\mW^T \vx)}~+~\text{h.o.t.}
\end{equation}
If the subspace perturbation is small enough, higher-order terms (h.o.t.) can be neglected and the mean squared error can be approximated as
\begin{equation} \label{eq:mse}
\myEx\left[\left(g\left(\widetilde{\mW}^T \vx \right) - g\left(\mW^T \vx\right) \right)^2\right] \approx \epsilon =  \myEx \left[\left(\vx^T (\widetilde{\mW} - \mW) \nabla_\vu g\left(\mW^T \vx \right)\right)^2\right].
\end{equation}
Clearly, the quantity $\epsilon$ depends on the specification of basis matrices, which are not fixed for given subspaces. In the following theorem, we show that it is possible to select basis matrices that allow $\epsilon$ to be bounded by a function of the perturbation  distance.
\begin{theorem} \label{thm:subspace_perturb}
Let $\mathcal{S} = \text{ran}(\mW)$, and $\widetilde{\mathcal{S}}$ be a perturbation of $\mathcal{S}$. Assume that the square of the gradient is bounded as $\nabla_\vu g^T \nabla_\vu g\leq G^2$ and $\myEx[\vx\vx^T] = \sigma_x^2 \mI_d$ (i.e.~inputs are independent and identically distributed). Then, if $dist(\mathcal{S}, \widetilde{\mathcal{S}}) \leq \sin(\theta_r)$, we can pick $\mW, \widetilde{\mW} \in \myRe^{d\times r}$ where $\mathcal{S} = \text{ran}(\mW)$ and $\widetilde{\mathcal{S}} = \text{ran}(\widetilde{\mW})$ such that 
\begin{equation}
\epsilon \leq G^2 \sigma_x^2 \sum_{i=1}^r (2-2\cos(\theta_r)),
\end{equation}
where $\epsilon$ is the first-order approximation to the mean squared error \eqref{eq:mse}.
\end{theorem}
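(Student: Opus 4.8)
The plan is to reduce the first-order mean squared error $\epsilon$ in \eqref{eq:mse} to a purely geometric quantity—the squared Frobenius distance between the two basis matrices—and then to control that distance through the principal angles between $\mathcal{S}$ and $\widetilde{\mathcal{S}}$, exploiting the freedom in choosing the bases.

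First I would handle the gradient bound pointwise. Writing $\nabla_\vu g := \nabla_\vu g(\mW^T\vx)$, the integrand in \eqref{eq:mse} equals $\vx^T(\widetilde{\mW}-\mW)\,\nabla_\vu g\,\nabla_\vu g^T(\widetilde{\mW}-\mW)^T\vx$. For each realisation of $\vx$ the rank-one matrix satisfies $\nabla_\vu g\,\nabla_\vu g^T \preceq (\nabla_\vu g^T\nabla_\vu g)\,\mI_r \preceq G^2\mI_r$, so the quadratic form is bounded by $G^2\,\vx^T(\widetilde{\mW}-\mW)(\widetilde{\mW}-\mW)^T\vx$ \emph{before} any expectation is taken. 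Taking expectations, using $\myEx[\vx^T\mA\vx]=\trace{\mA\,\myEx[\vx\vx^T]}$ together with the hypothesis $\myEx[\vx\vx^T]=\sigma_x^2\mI_d$, then gives $\epsilon \le G^2\sigma_x^2\,\trace{(\widetilde{\mW}-\mW)(\widetilde{\mW}-\mW)^T}=G^2\sigma_x^2\norm{\widetilde{\mW}-\mW}{F}^2$.

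Next I would align the bases. Let $\phi_1\le\cdots\le\phi_r$ be the principal angles between $\mathcal{S}$ and $\widetilde{\mathcal{S}}$, read off from the singular values $\cos\phi_i$ of $\mW^T\widetilde{\mW}$. Taking the SVD $\mW^T\widetilde{\mW}=\mP\mSigma\mQ^T$ and replacing $\mW\mapsto\mW\mP$, $\widetilde{\mW}\mapsto\widetilde{\mW}\mQ$ rotates both bases into principal position—preserving column spaces and orthonormality—so that $\mW^T\widetilde{\mW}=\operatorname{diag}(\cos\phi_1,\dots,\cos\phi_r)$. Expanding the squared Frobenius norm and using orthonormality of both bases gives $\norm{\widetilde{\mW}-\mW}{F}^2 = 2r-2\trace{\mW^T\widetilde{\mW}}$, and substituting the diagonal form yields $\norm{\widetilde{\mW}-\mW}{F}^2=\sum_{i=1}^r(2-2\cos\phi_i)$.

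Finally I would invoke the standard identity $dist(\mathcal{S},\widetilde{\mathcal{S}})=\norm{\mW\mW^T-\widetilde{\mW}\widetilde{\mW}^T}{2}=\sin\phi_r$ for equidimensional subspaces. The hypothesis $dist(\mathcal{S},\widetilde{\mathcal{S}})\le\sin\theta_r$ then forces $\phi_r\le\theta_r$ (all angles lying in $[0,\pi/2]$), hence $\phi_i\le\theta_r$ for every $i$; monotonicity of $\cos$ on $[0,\pi/2]$ gives $2-2\cos\phi_i\le 2-2\cos\theta_r$, and summing delivers $\epsilon\le G^2\sigma_x^2\sum_{i=1}^r(2-2\cos\theta_r)$, as claimed. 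I expect the crux to be the basis-alignment step: neither $\epsilon$ nor the Frobenius distance is basis-invariant, so the statement only holds for a judicious choice of $\mW,\widetilde{\mW}$, and making that choice precise via the canonical (principal-angle) decomposition—while checking that the rotation preserves both ranges and orthonormality—is the substantive part. The pointwise operator inequality $\nabla_\vu g\,\nabla_\vu g^T\preceq G^2\mI_r$ and the remaining trace manipulations are routine once that choice is fixed.
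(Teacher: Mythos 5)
Your proposal is correct and follows essentially the same route as the paper: the paper also bounds $\epsilon \le G^2\sigma_x^2\norm{\widetilde{\mW}-\mW}{F}^2$ via Cauchy--Schwarz (equivalent to your pointwise operator inequality), and then invokes its Lemma on principal vectors---which is exactly your SVD alignment of $\mW^T\widetilde{\mW}$---to get $\sum_{i=1}^r(2-2\vw_i^T\widetilde{\vw}_i)\le\sum_{i=1}^r(2-2\cos(\theta_r))$. Your write-up is, if anything, slightly more careful in distinguishing the actual principal angles $\phi_i$ from the hypothesised bound $\theta_r$, a distinction the paper's notation blurs.
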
 
\begin{proof}
See \Cref{sec:subspace_proof}.
\end{proof}

\Cref{thm:subspace_perturb} establishes a stability bound on the approximation error of a ridge function with a small perturbation of the associated subspace. Given the Lipschitz continuity of the underlying field with respect to the spatial domain, it is reasonable to assume that neighbouring nodes are determined by ridge directions that are closely related to each other. This motivates the proposal of algorithms to compress the representation of an embedded ridge function by approximating the ridge directions of some nodes as a function of their neighbours. After compression, only a fraction of the original ridge directions need to be stored.

In passing, we note that the actual approximation error incurred via compression can be smaller than suggested by \Cref{thm:subspace_perturb}, since the change in the ridge profile $g$ as a result of the perturbation in the subspace is not accounted for. In practice, after approximating the subspace by a perturbed version of its original value, the ridge profile can be refitted to data projected to the new subspace, minimising the MSE in the process. The new error can be smaller than simply applying $g$ to the data projected to the new subspace without changing the compression level.

\subsection{Ridge compression and recovery} \label{sec:sparse_algs}
Given \emph{a priori} knowledge of the relationship between neighbouring ridge directions, we can avoid storing the ridge directions for all output components. This is useful for re-creating the PDE-scalar field from the selected nodes. To this goal, we propose the \emph{ridge compression and recovery} algorithms. The former allows us to retain only a subset of suitably subsampled output nodes (components of the vector $\vf$); the latter recovers the remaining nodes from these subsamples.

Our algorithm for ridge compression is detailed in \Cref{algo_ave}, where each removed component will be reconstructed by the average of two of its closest neighbours. Given an embedded ridge approximation and the number of components that need to be removed $k$, we iterate through all the nodes $N$ and identify two neighbours for each node. These neighbours are identified based on the smallest subspace distance (see \eqref{equ:subspace_distance}) between successive nodes; see steps 6 and 7. In step 7, we require that the second closest neighbour must be closer to the candidate to be removed than the first neighbour in step 6. If this step is not enforced, the average between the neighbours is a poor approximation of the removed candidate. Following this, in step 10, we sort the candidates for removal by considering the sum of the distances of the removal candidates to their two neighbours. Removing a candidate with smaller total distance is prioritised over removing one with larger total distance. From step 11 onwards, we attempt to remove the candidates, according to the order determined in step 10. The recovery algorithm (\Cref{algo_ave_recov}) reconstructs the missing components based on the list of nearest neighbours (the output from \Cref{algo_ave}). We only consider the case where $r=1$ here, permitting us to easily estimate the missing node's ridge subspace as a linear combination of the neighbouring components.
{\SetAlgoNoLine%
\begin{algorithm}[h]
\caption{Ridge compression algorithm for embedded ridge approximations.}
\label{algo_ave}
\begin{algorithm2e}[H]
\Input{List of ridge directions $\mW_{1},...,\mW_{N}$ corresponding to  $g_1,...,g_N$ (but the ridge profiles are not needed), and the number of components to retain $k$.}
\Output{List of subsampled ridge directions $\mW_{N_1},...,\mW_{N_k}$, and a list of nearest neighbours $L \in \mathbb{N}^{(N-k)\times 2}$ corresponding to missing components $m \in \mathbb{N}^{N-k}$. }
Initialise empty list $m$ and array $L$, and $I_s = (1,...,N)$.\\
\While {length of $m$ is smaller than $N-k$ and $I_s \neq \varnothing$}
{
	$I' = I_s \backslash (m\cup L)$\footnote{Note that we convert the lists to sets before we perform set operations on them; that is, we remove duplicate elements and no longer enforce the order which was present in the list, and for a two-dimensional array we flatten the array and consider all distinct elements.} \Comment{Gather the remaining non-removed, non-paired components.}\\
	$A = L \cup I'$. \Comment{Gather the available neighbours.}\\
	\For {$i$ up to the length of $I'$}
	{
	$L'[i,1] = \text{argmin}_{j\in A \backslash i} dist(\mW_i, \mW_j)$ \Comment{Find the best neighbours for each index.}\\
	$L'[i,2] = \, \text{argmin}_{j\in A \textbackslash \{L'[i,1], i\}} dist(\mW_i, \mW_j)$\\
	\nonl \quad subject to $dist(\mW_i, \mW_j) <  dist(\mW_j, \mW_{L'[i,1]})$\\
	$D'[i]= dist(\mW_i, \mW_{L'[i,1]}) + dist(\mW_i, \mW_{L'[i,2]}).$ \Comment{Compute total distances.}\\
	}
	Sort $I'$ and $L'$ columnwise in ascending order of $D'$ to give $I_s$ and $L_s$.\\
	\For {$i$ up to the length of $I_s$}
	{
		\If {$I_s[i] \notin m \cup L$ and $L_s[i,1], L_s[i,2] \notin m$}
		{
			$m \leftarrow m \cup \lbrace I_s[i] \rbrace$.\\
			$L \leftarrow L \cup \lbrace L_s[i,1], L_s[i,2] \rbrace$.\\
		}
	}
}
$k = N - \text{length}(m)$.\footnote{This may be larger than the input $k$ because no further components can be removed without compromising the neighbours of the already removed ones.}\\
$(N_1,\ldots,N_k) = (1,...,N)\backslash m$.\\
Retain $\mW_{N_1},...,\mW_{N_k}$ and discard the rest.\\
\end{algorithm2e}
\end{algorithm}}

{\SetAlgoNoLine%
\begin{algorithm}[h]
\caption{Ridge recovery algorithm for a compressed embedded ridge approximation.}
\label{algo_ave_recov}
\begin{algorithm2e}[H]
\Input{Subsampled points $(N_1,...,N_k)$, list of matrices $\mW_{N_1}, \ldots ,\mW_{N_k} \in \myRe^d$, and a list of nearest neighbours $L\in \mathbb{N}^{(N-k)\times 2}$ corresponding to missing components $m \in \mathbb{N}^{N-k}$.}
\Output{$\mU_1, \ldots ,\mU_N$ }
\For{$i=1,...,N$}
{
	\eIf{$i\in (N_1,\ldots,N_k)$}
	{
		$\mU_i = \mW_i$.
	}
	{	
		Find $j$ such that $m[j] = i$.\\
		$\mU_{i1}' = \mW_{L[j,1]} + \mW_{L[j,2]}$\\ 
		$\mU_{i2}' = \mW_{L[j,1]} - \mW_{L[j,2]}$\\
		$P = \text{argmin}_{p=1,2} dist(\mU_{ip}',\mW_{L[j,1]})$\\
		$\mU_i = \text{column normalize}(\mU_{iP}')$
	}
}
\end{algorithm2e}
\end{algorithm}}

In the ridge compression algorithm, once a node is marked as one of the neighbours of a removed node, it can no longer be removed. This sets a hard limit on how many nodes can be removed before all stored nodes are marked. One way to circumvent this difficulty is to apply the compression and recovery algorithms \emph{recursively}. \Cref{fig:compression_recursive} illustrates this idea. At each compression stage, up to $S$ components are compressed, where $S$ can be set by the user. After this, the remaining components are fed to the next stage as input to remove up to a further $S$ components and so on. To recover the removed components, the recovery algorithm is applied stagewise, similarly to the compression process, in the reverse direction. Note that upon passing the remaining components to the next stage, even though some of the remaining components are neighbours to removed components in the previous stage, it is possible to remove them in the next stage. This is because their ridge directions are not required until the corresponding stage in the recovery process, when these components will have been reconstructed by the previous recovery stage. In this way, the degree of compression can be increased.

\begin{figure}
\includegraphics[width=\linewidth]{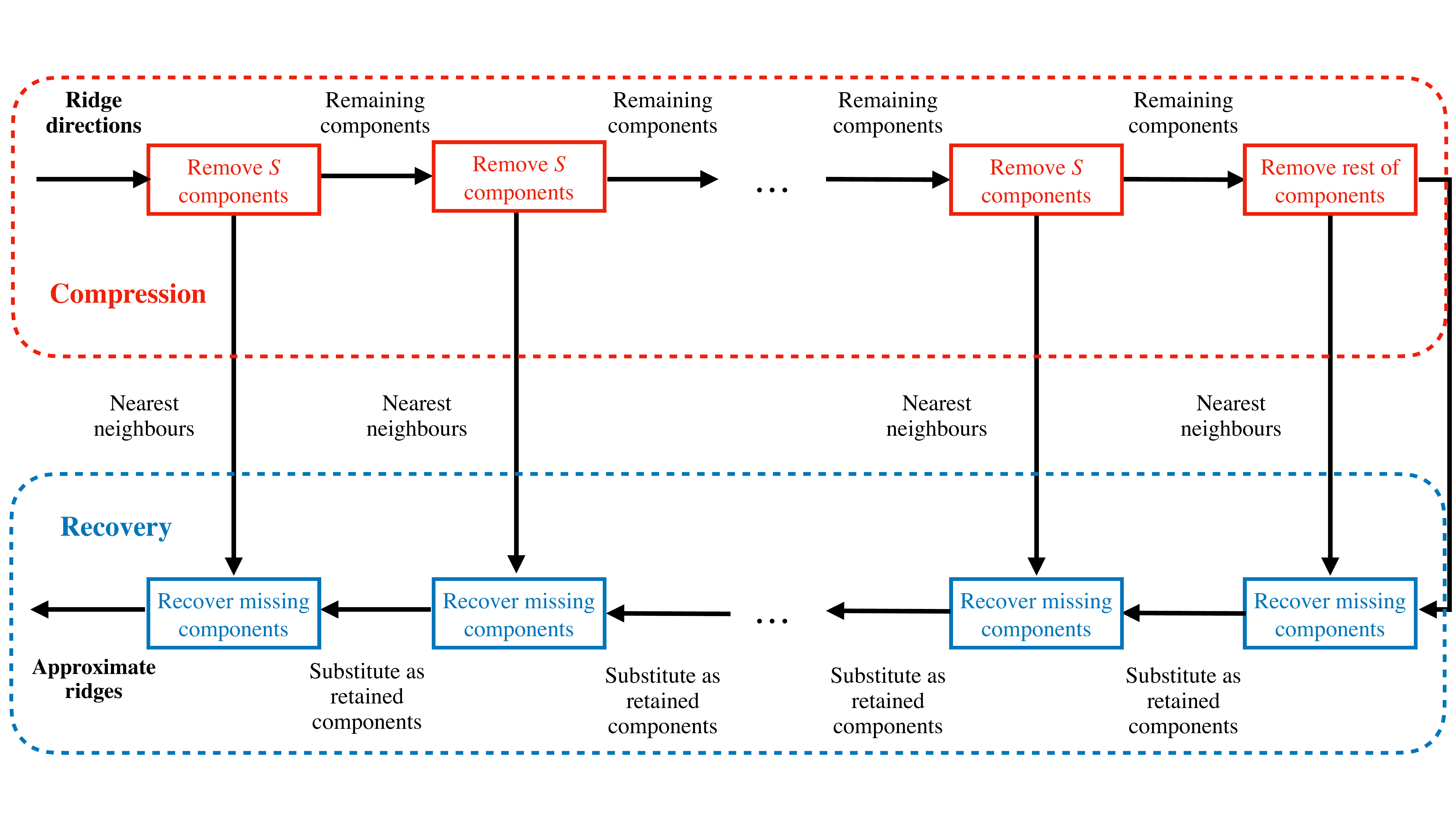}
\caption{Schematic for applying the ridge compression algorithm (\Cref{algo_ave}) and recovery algorithm (\Cref{algo_ave_recov}) recursively, removing at most $S$ components at a time.}
\label{fig:compression_recursive}
\end{figure}

Both the ridge compression and recovery algorithms presented in this section are greedy algorithms, which may therefore not result in the storage configuration that globally minimises the distance between the missing components and their neighbours. However, to determine the globally optimal solution requires a combinatorial search over every storage configuration, which is computationally prohibitive. 

Note that the compression problem can be interpreted as a clustering task, where cluster centres are retained and all other ridges can be recovered by identifying each with the closest cluster or a linear combination of the two closest centres. Operating with a non-Euclidean metric defined by the subspace distance, clustering algorithms such as $k$-medoids can be used. \Cref{alg:k-medoids} describes an algorithm for clustering ridge directions using $k$-medoids, based on its implementation in \cite{park2009simple}. The corresponding recovery algorithm can be similar to \Cref{algo_ave_recov}. Alternatively, each removed component can be replaced by its nearest medoid. In \Cref{sec:storage_results}, our  algorithm is compared with $k$-medoids for compressing the flow field of a CFD simulation case study.

{\SetAlgoNoLine%
\begin{algorithm}[h]
\caption{Clustering ridge directions with $k$-medoids.}
\label{alg:k-medoids}
\begin{algorithm2e}[H]
\Input{List of ridge directions $\mW_{1},...,\mW_{N}$ corresponding to  $g_1,...,g_N$ (but the ridge profiles are not needed), and the number of components to retain $k$.}
\Output{List of subsampled ridge directions $\mW_{N_1},...,\mW_{N_k}$, and a list of nearest neighbours $L \in \mathbb{N}^{(N-k)\times 2}$ corresponding to missing components.}
Initialise list of medoids $D \subset (1,...,N)$ randomly, where the number of medoids is equal to $k$.\\
Assign each non-medoid component to its closest medoid.\\
$\Sigma_d = $ sum of distances of each non-medoid component to its nearest medoid.\\
\While{the value of $\Sigma_d$ is different from its value in previous iteration}
{
Find a new medoid from each cluster which minimises the sum of distances to all other components in the cluster.\\
Assign each non-medoid component to its new closest medoid and calculate $\Sigma_d$.\\
}
\For{each non-medoid component $\mW_i$}
{
$L[i,1] = \text{argmin}_{j\in D \backslash i} dist(\mW_i, \mW_j)$\\
$L[i,2] = \, \text{argmin}_{j\in D \textbackslash \{L'[i,1], i\}} dist(\mW_i, \mW_j)$\\
	\nonl \quad subject to $dist(\mW_i, \mW_j) <  dist(\mW_j, \mW_{L[i,1]})$\\
}
Retain $\mW_{N_1},...,\mW_{N_k}$ where $D = (N_1,...,N_k)$ are the medoids, and discard the rest.\\
\end{algorithm2e}
\end{algorithm}}

\section{Numerical examples} \label{sec:examples}
In this section, we illustrate the embedded ridge approximation approach with an analytical example and a CFD example.
\subsection{Analytical example} \label{sec:analytical_ex}
Consider the function
\begin{equation} \label{equ:analytical_ex}
h(\vx) = [2\quad 3\quad 5]\begin{bmatrix}
f_1(\vx)\\
f_2(\vx)\\
f_3(\vx)
\end{bmatrix}
\end{equation}
where 
\begin{align*}
f_1(\vx) &= \left(\vw_1^T \vx\right)^2 + \left(\vw_1^T \vx\right)^3,\\
f_2(\vx) &= \exp\left(\vw_2^T \vx\right),\\
f_3(\vx) &= \sin\left(\left(\vw_3^T \vx\right) \pi \right),
\end{align*}
defined over the domain $\mathcal{D} = [-1,1]^{10}$ where the inputs $\vx$ are independent and have uniform marginals. Note that $h(\vx)$ is an exact ridge function with three ridge directions spanned by the columns of $\mU = [\vw_1, \vw_2, \vw_3]$. We draw $\vw_1, \vw_2, \vw_3$ as random vectors with unit Euclidean norm and compare the recovered ridge directions to the drawn vectors using the subspace distance (see \eqref{equ:subspace_distance}). In this example, polynomial variable projection (VP) is used for finding ridge directions, where the polynomials have a maximum total degree of 7. For the optimisation loop inside the algorithm for VP (see \cite[Algorithm 4.1]{hokanson2018data-driven}), we set the convergence criterion to be when the subspace distance between the ridge directions of the previous and current iterations is smaller than $10^{-7}$. Our implementation of this algorithm can be found in the \emph{Effective Quadratures} open-source library \cite{seshadri2017effective-quadratures} (\url{https://www.effective-quadratures.org/}). 

For embedded ridge approximation, we use VP to estimate the ridge directions for each component function $f_1(\vx), f_2(\vx)$ and $f_3(\vx)$, and then calculate the first three leading eigenvectors of the vector gradient covariance matrix of $\vf(\vx) = [f_1(\vx), f_2(\vx), f_3(\vx)]^T$. The weights are set as $\boldsymbol{\omega} = [2, 3, 5]^T$ to find an estimate of the dimension-reducing subspace of $h(\vx)$. For direct ridge approximation, we use VP to estimate the three-dimensional dimension-reducing subspace of $h(\vx)$ directly. We vary the number of observations used for each method and examine the subspace distance between the recovered directions and the true directions. We note that the results are binary---we either get a small subspace distance from successful recovery or a large subspace error from failure in recovery. Thus, we plot the probability of successful recovery---where the subspace distance is below 0.005---across 40 trials on the left of \Cref{fig:analytical_ex}. This plot shows that recovery using embedded ridge approximations is more stable and requires fewer observations than direct ridge approximation for a given recovery probability. 

To achieve successful recovery of $\mU$ from the embedded ridge approximation, we need to be able to successfully recover the ridge directions in each individual function, as reflected from the right plot of \Cref{fig:analytical_ex}. Interestingly, despite the need to successfully find three sets of ridge directions concurrently, the probability of recovery is still significantly higher for the embedded ridge approximation method. This is because the optimisation over three-dimensional subspaces required in the direct method is much more challenging than their one-dimensional counterparts required in the embedded method (see Table 3 in \cite{hokanson2018data-driven}).

\begin{figure}
\begin{center}
\begin{minipage}[b]{0.475\linewidth}
\includegraphics[width=1\linewidth]{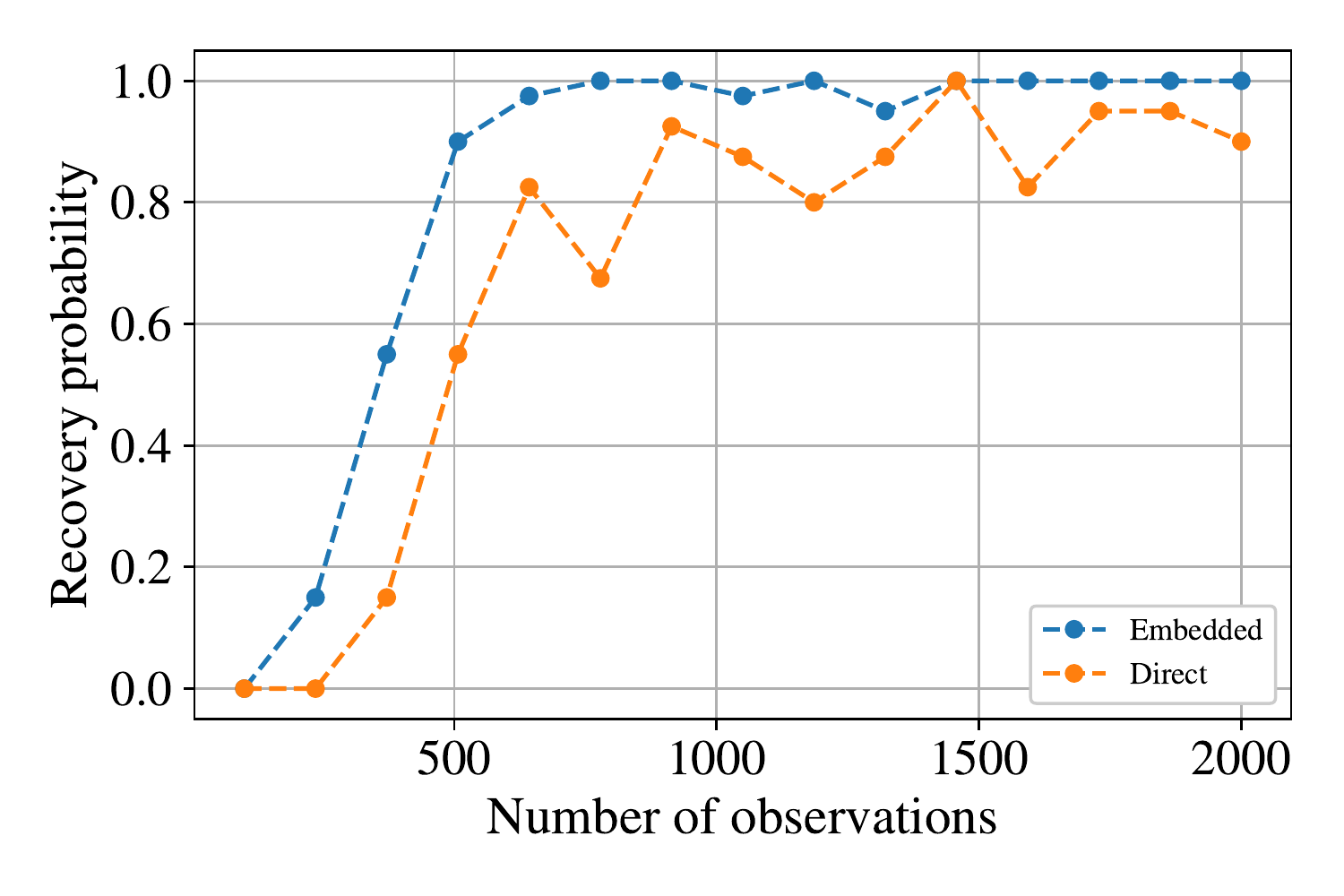}
\end{minipage}
\begin{minipage}[b]{0.475\linewidth}
\includegraphics[width=1\linewidth]{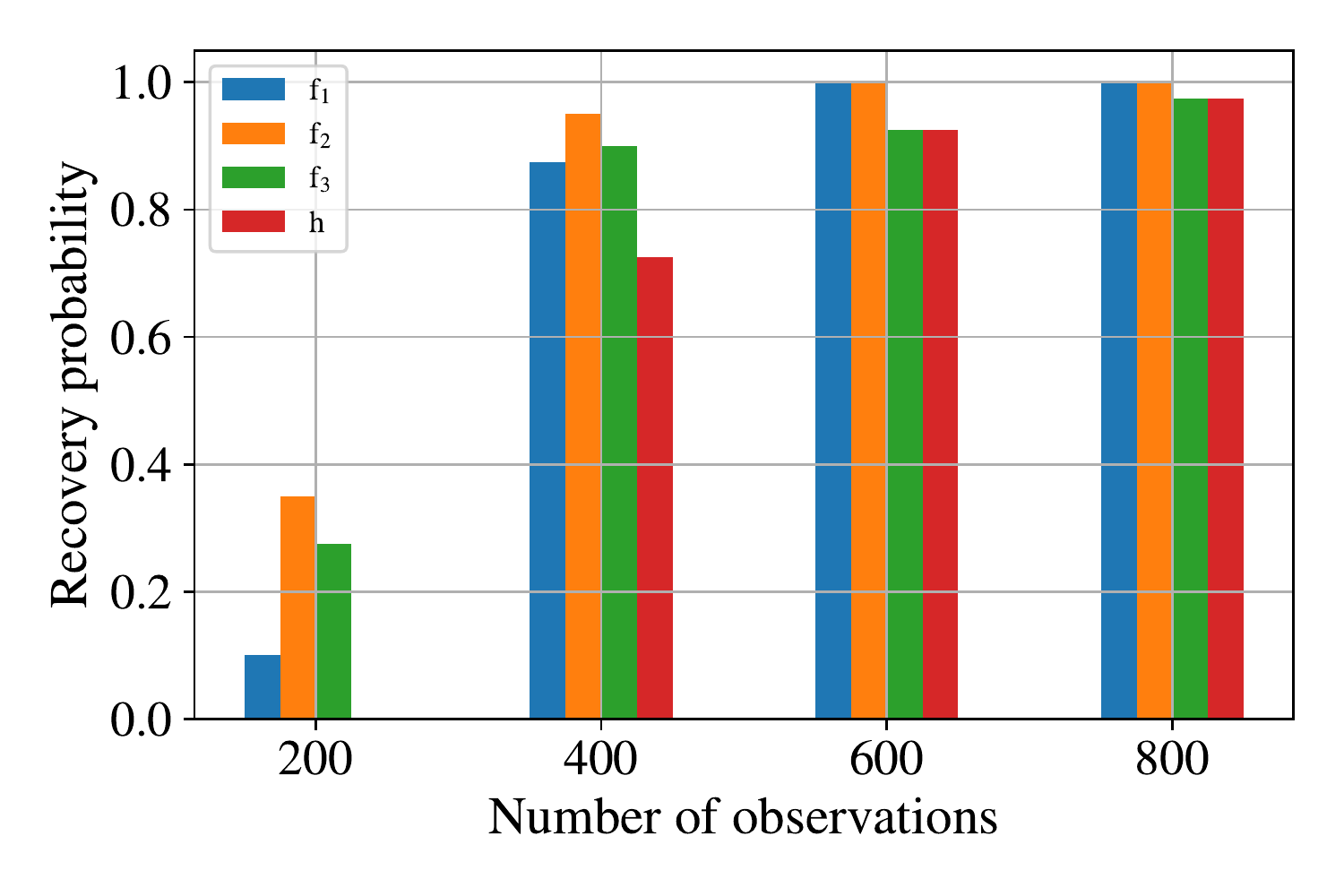}
\end{minipage}
\caption{(Left) Comparing the average recovery probability for embedded and direct ridge approximation for $h(\vx)$ in \eqref{equ:analytical_ex}. (Right) Recovery probability of component ridges and qoi ridge when using an embedded ridge approximation. A successful recovery is defined to be when the subspace error is smaller than 0.005. Forty trials are performed. }\label{fig:analytical_ex}
\end{center}
\end{figure}

\subsection{Shape design of the NACA0012}
We apply the embedded ridge function approximation algorithm (see \Cref{alg:embedded_ridge_comp}) to the shape design of the NACA0012 airfoil. The shape deformation of the baseline NACA0012 profile is parameterised using $d=50$ Hicks-Henne bump functions around the airfoil, and the variation in the surface pressure profile is measured. We fix an entry Mach number of 0.3 (subsonic) and an angle of attack of 1.25\textdegree, with free-stream temperature and pressure at 273.15 K and 101325 Pa, respectively. The pressure profile is solved using the compressible Euler flow solver in the open source CFD suite $SU^2$ \cite{economon2016su2:}. The coefficients of lift and drag\footnote{Ignoring skin friction and assuming a unit reference area.} are known to be linear functions of the pressure around the airfoil \cite[Ch.~1]{anderson2010fundamentals}, given by

\begin{minipage}[b]{0.45\linewidth}
\begin{align*}
C_l &= \frac{1}{\frac{1}{2}\rho_0 v_{\infty}^2}\oint p(\mb{x}) \mb{n}\cdot \mb{k}\; \text{d}s\\
&\approx \frac{1}{\frac{1}{2}\rho_0 v_{\infty}^2} \sum_{i=1}^N p_i(\mb{x})  \mb{n}_i \cdot \mb{k} \;\Delta s_i\\
&= \boldsymbol{\omega}_l^T \mb{p}(\mb{x}),
\end{align*}
\end{minipage}
\begin{minipage}[b]{0.45\linewidth}
\begin{align*}
C_d &= \frac{1}{\frac{1}{2}\rho_0 v_{\infty}^2}\oint p(\mb{x}) \mb{n}\cdot \mb{j} \; \text{d}s\\
&\approx \frac{1}{\frac{1}{2}\rho_0 v_{\infty}^2} \sum_{i=1}^N p_i(\mb{x})  \mb{n}_i \cdot \mb{j} \;\Delta s_i\\
&= \boldsymbol{\omega}_d^T \mb{p}(\mb{x}),
\end{align*}
\end{minipage}

\noindent
where the integral is evaluated around the airfoil surface, spatially parameterised by $s$. The input variable $\vx \in \myRe^d$ contains the Hicks-Henne bump amplitudes; $\mb{n}$ is the surface normal, $\mb{k}$ the direction perpendicular to the flow, and $\mb{j}$ the direction parallel to the flow (see \cref{fig:airfoil_local_models}). In the normalising factors, $\rho_0$ is the free-stream density, and $v_{\infty}$ is the free-stream speed. We discretise this problem by considering $N=200$ measurements of pressure around the airfoil, resulting in the vector-valued function $\mb{p}: \myRe^d \rightarrow \myRe^N$ representing the surface pressure profile. Note that the approximation in the second line of both expressions comes not only from the discretisation but also from the assumption that $\vn$ is independent of $\vx$---a good approximation when the geometric perturbations are small. Under this approximation, the coefficients of lift and drag can then be expressed as linear functions of the components of $\mb{p(x)}$.

\begin{figure}
\includegraphics[width=\linewidth]{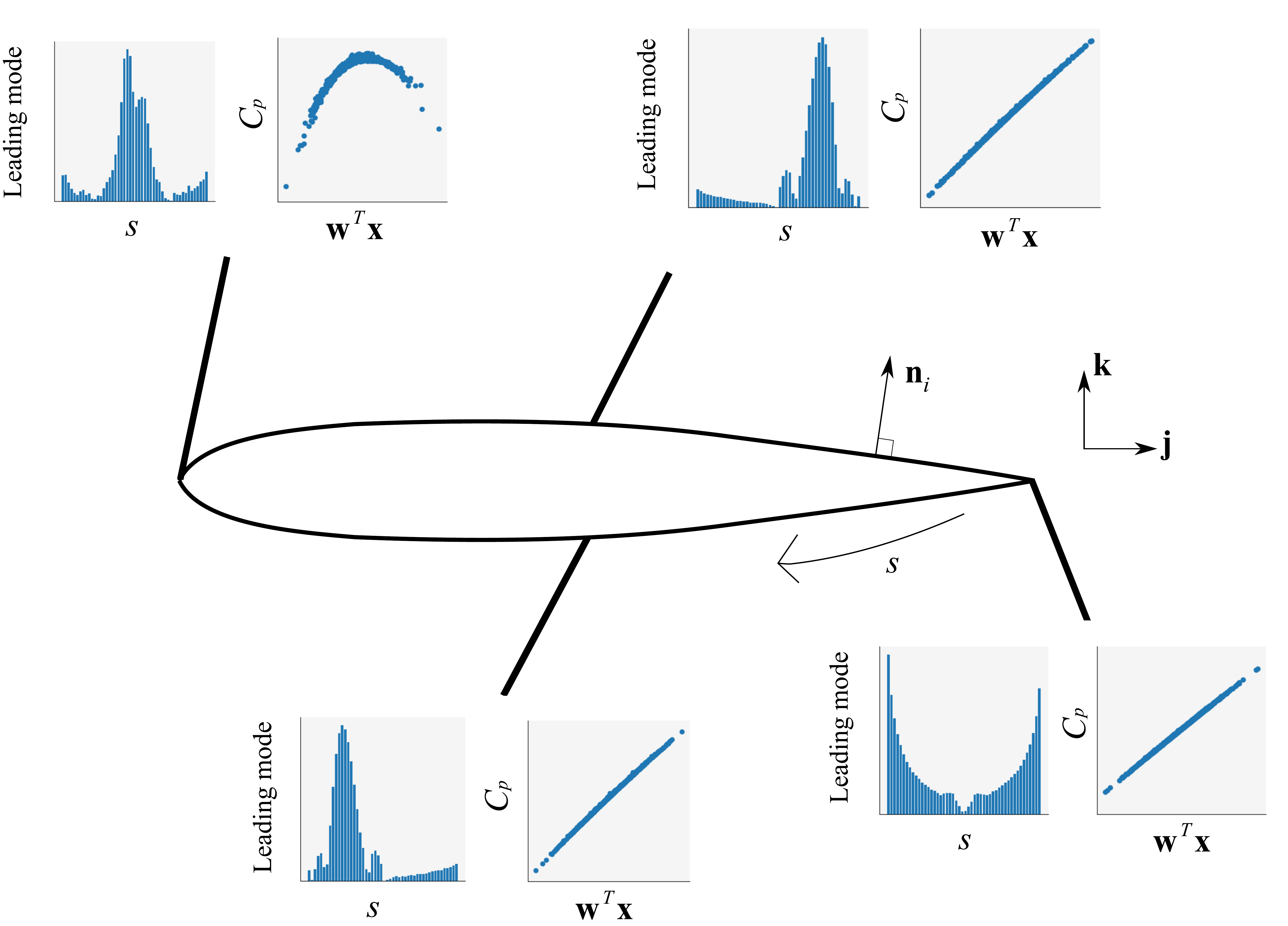}
\caption{Fitting a one-dimensional polynomial ridge function for each pressure component. The left-hand plot of each pair is the magnitude of the leading mode against the airfoil coordinate ($s$); the right-hand plot is a sufficient summary plot \cite{cook1998regression} at each location.}
\label{fig:airfoil_local_models}
\end{figure}

As the flow is entirely subsonic and inviscid, we expect the bumps to have a strongly local influence. Hence, the pressure profile $\mb{p(x)}$ is well-approximated by an embedded ridge function. This motivates the following approach to estimate $C_l$ and $C_d$, which applies the steps in \Cref{alg:embedded_ridge_comp} assuming each node is approximated by a one-dimensional ridge function.

\begin{enumerate}
\item Using a gradient-free computational strategy, estimate the leading ridge direction $\widehat{\vw}_i$ for each $p_i(\vx)$. 
\item Fit a low-dimensional surrogate using this leading mode for each $p_i$. That is, we seek
\begin{equation}
p_i(\mb{x}) \approx \widehat{g}_i(\widehat{\vw}_i^T\mb{x}),
\end{equation}
for the $i$-th component of $\mb{p}$. We use univariate orthogonal polynomials for the profiles $\widehat{g}_i(\cdot)$.
\item We can compute the elements of the Jacobian via these ridge approximations:
\begin{equation}
\widehat{\mJ}(\mb{x})_{ij} = \widehat{w}_{ji} \widehat{g}_j'(\widehat{\mb{w}}_j^T\mb{x}),
\end{equation}
where $\widehat{w}_{ji}$ is the $i$-th element of $\widehat{\mb{w}}_j$. Gradients here are furnished by the polynomial approximation analytically.
\item Compute the gradient covariance matrix with \eqref{equ:MC_Ch} by substituting $\boldsymbol{\omega}_l$ and $\boldsymbol{\omega}_d$, from which we can compute the dimension-reducing subspaces and form ridge approximations of the scalar qois (the coefficients of lift and drag respectively).
\end{enumerate}

\subsubsection{Results}

To apply the embedded ridge approximation approach, we will fit a one-dimensional ridge function for each component of the surface pressure profile, $p_i$, and use a quadratic ridge profile for each component. Three gradient-free dimension-reducing strategies to find the ridge subspaces at each component are studied:

\begin{enumerate}
\item Fitting global linear models for each node, and taking the ridge direction as the normalised parameters of the linear model \cite[Algorithm 1.3]{constantine2015active}---see Appendix \ref{sec:lin_models} for further details. Note that the \emph{ridge profiles} are still quadratic; the linear models are only used to find the ridge directions. This will be referred to as ``Embedded linear''.
\item As above, but using quadratic polynomial VP only for nodes close to the leading edge, noting that pressure variation near the leading edge tends to be non-linear. The ridge subspace remains one-dimensional for all nodes. This will be referred to as ``Embedded VP''.
\item As above, but using MAVE \cite{xia2002adaptive} only for nodes close to the leading edge to extract a one-dimensional ridge subspace. Then, a quadratic polynomial is fitted in this one-dimensional subspace for these nodes. This will be referred to as ``Embedded MAVE''.
\end{enumerate}
The implementation for VP is the same as in \Cref{sec:analytical_ex}. For MAVE, we adapt the R code from Hang and Xia \cite{hang2018mave:}. A brief exposition on the MAVE method is provided in Appendix \ref{sec:MAVE}. Once the nodal ridge approximations are formulated, the ridge approximations for the qois $C_l$ and $C_d$ are furnished via \Cref{alg:embedded_ridge_comp}.

The embedded ridge approximation approach is compared with the direct ridge approximation approach, where observations for $C_l$ and $C_d$ are used to find a ridge approximation directly and without the use of gradients. For the direct approach, three dimension-reducing strategies are studied---VP, MAVE and the linear model. For both the embedded and direct approaches, one dimension is used for the ridge approximation of $C_l$, and two dimensions for $C_d$. Note that the linear model used in the direct approach is unable to estimate more than one dimension, so only one is used for $C_d$ in this case.\footnote{It is possible to construct surrogate models for $C_l$ and $C_d$ based on nodal ridge approximations alone, as noted in \Cref{sec:Embedded_ridge_functions}. However, as noted in the same section, the ridge subspace has utility on its own. Moreover, in this case study, evaluating the ridge approximation for the qois sets up a better comparison with direct ridge approximations.}

In \Cref{fig:emu_err}, we plot the MSE of the surrogate model fitted using the dimension-reducing subspaces resulting from embedded and direct ridge approximation. The MSE of approximating the qoi $h(\vx)$ with $\widehat{h}(\vx)$ is evaluated as
\begin{equation} \label{equ:MSE_metric_qoi}
\epsilon_{h} = \frac{1}{M'} \sum_{j=1}^{M'} \frac{ \left(   h(\vy^{(j)}) - \widehat{h}(\vy^{(j)})  \right) ^2}{\sigma_{h}^2},
\end{equation}
where $\vy^{(j)}\in \myRe^d$ are verification samples drawn independently from data used to train the response surfaces. The ridge approximation of $h(\vx)$ evaluated at $\vy^{(j)}$ is denoted $\widehat{h}(\vy^{(j)}) = \widehat{g}_i\left(\widehat{\mU}^T \vy^{(j)}\right)$, and $\sigma_{h}^2$ is the sample variance of $h(\vy)$ across all $M'$ verification samples. 

It is shown that using embedded ridge approximation reduces the MSE compared to direct estimation when the number of samples is limited. The errors for embedded VP and MAVE approximately reach convergence in 300 observations for $C_l$, and 400 observations for $C_d$. Although the linear models (in both the direct and embedded cases) suffice for estimating $C_l$, for functions with stronger non-linear dependencies such as $C_d$, the linear model is shown to have a larger error compared to VP and MAVE. We also note that the use of embedded ridge approximation permits us to extend the capability of linear models to estimate more than one mode in the scalar qois, improving its performance as seen on the right of \Cref{fig:emu_err}.

\begin{figure}
\begin{center}
\begin{minipage}[b]{0.475\linewidth}
\includegraphics[width=1\linewidth]{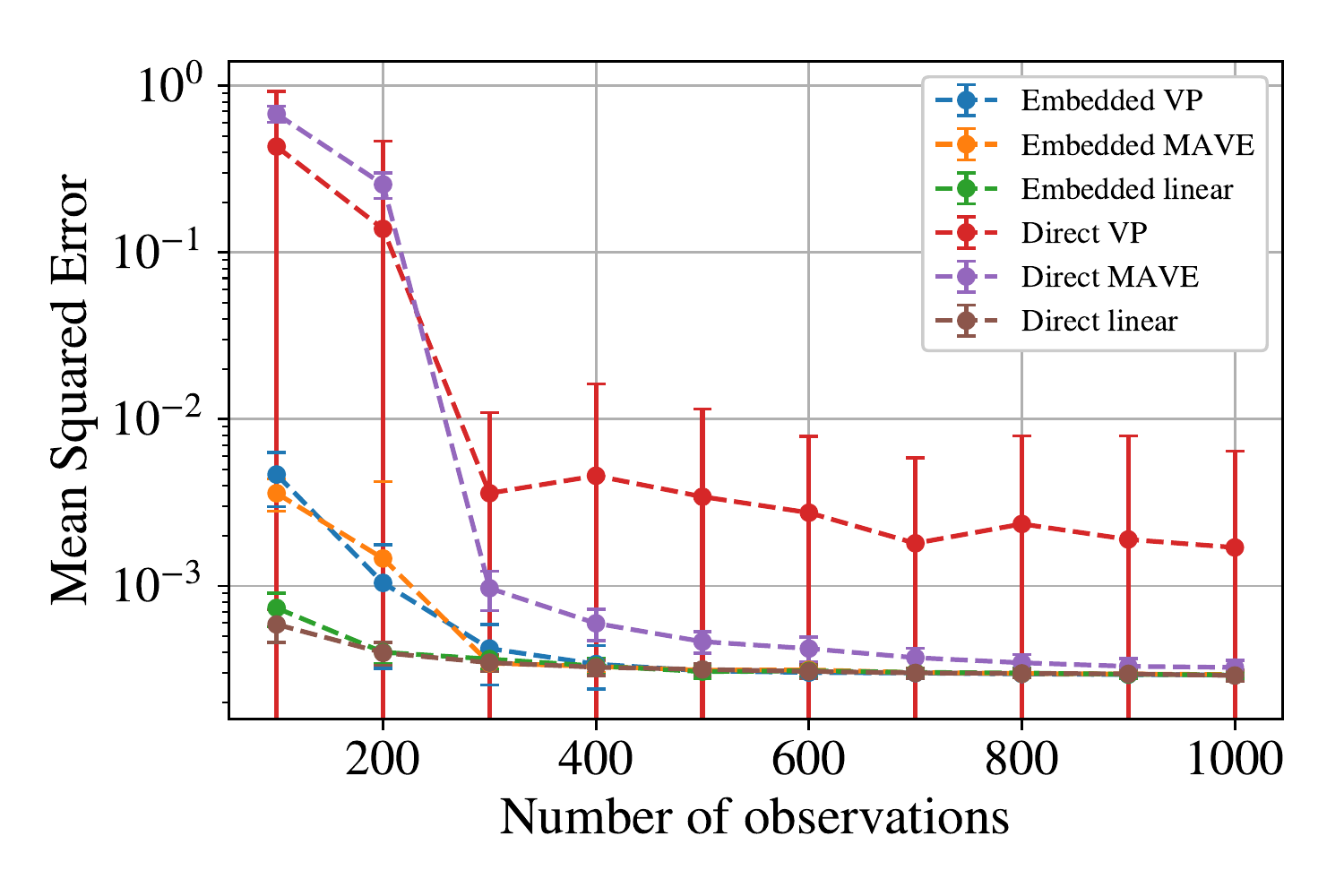}
\end{minipage}
\begin{minipage}[b]{0.475\linewidth}
\includegraphics[width=1\linewidth]{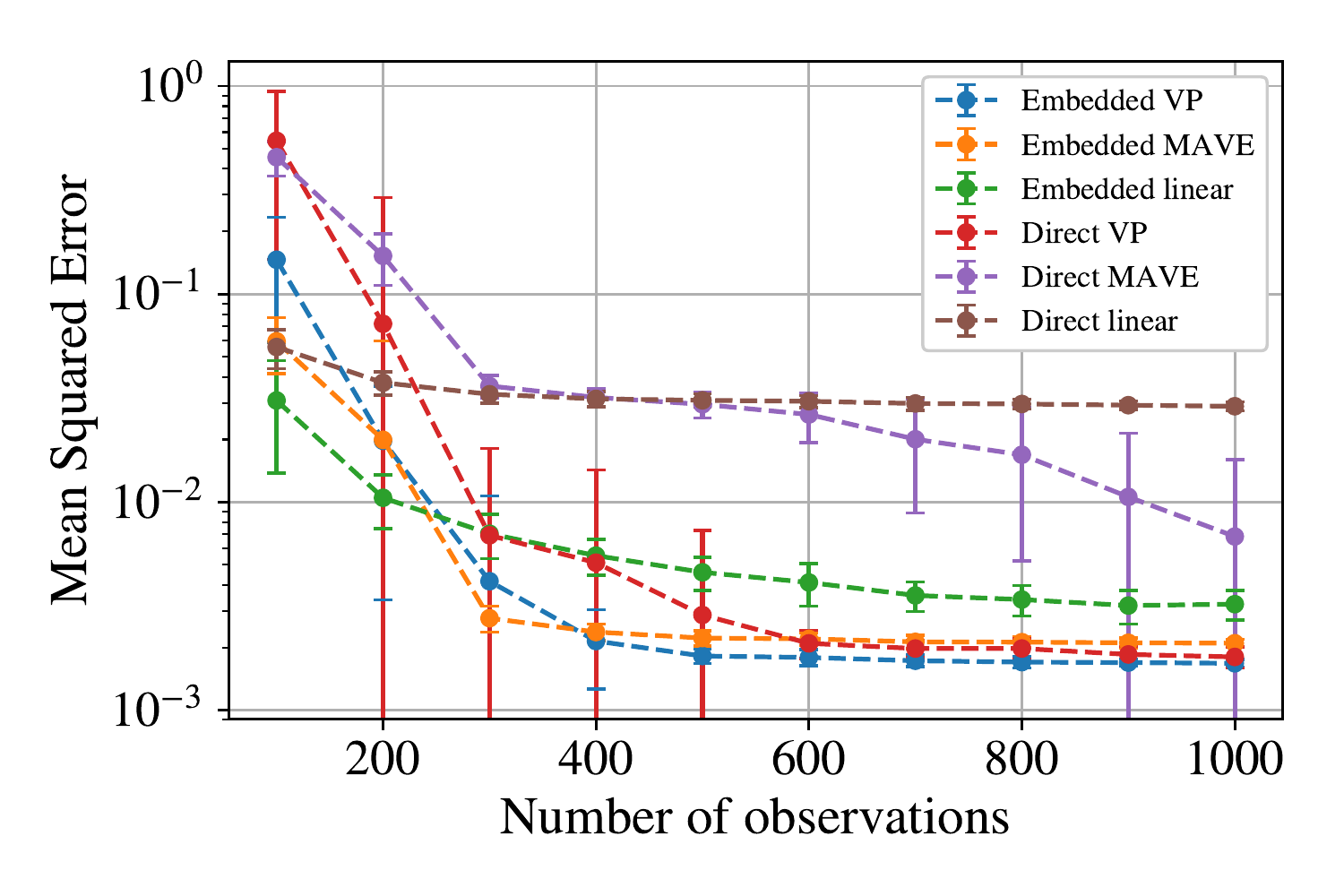}
\end{minipage}
\caption{Mean squared error of $C_l$ (left) and $C_d$ (right) for surrogate models with VP, MAVE and linear models via direct and embedded ridge approximations.}\label{fig:emu_err}
\end{center}
\end{figure}

\subsection{Sparse storage of NACA0012 pressure field}
\label{sec:storage_results}

In this subsection, we demonstrate the application of the ridge compression and recovery algorithms described in \Cref{sec:storage} on the estimation of the pressure field around the NACA0012 airfoil. The flow conditions and perturbation variables are set exactly the same as in the previous subsection. In the flow solution, the computational domain is discretised into $N=5233$ nodes. It is observed that each node in the flow field can be well-approximated by a one-dimensional ridge function. We examine the efficacy of our storage and recovery algorithms by selecting a subset of the components to store and attempting to recover the missing components from the stored ones.

We remove a range of numbers of components using the ridge compression algorithm (\Cref{algo_ave}) applied recursively, where in each stage at most $S=500$ components are removed. Then, we reconstruct the missing ridge directions using the ridge recovery algorithm (\Cref{algo_ave_recov}), again applied recursively. The reconstruction quality is evaluated using the average normalised MSE $\epsilon_R$, defined as
\begin{equation} \label{equ:MSE_metric}
\epsilon_R = \frac{1}{N'M'} \sum_{i=1}^{N'} \sum_{j=1}^{M'} \frac{\left(p_i\left(\vy^{(j)}\right) - \widehat{p}_i\left(\vy^{(j)}\right)\right)^2}{\sigma_{pi}^2},
\end{equation}
where $N'$ is the number of recovered components, and other variables are defined similarly as before. For comparison, the $k$-medoids clustering algorithm (\Cref{alg:k-medoids}) is run under the same settings, with the same recovery algorithm (\Cref{algo_ave_recov}). In addition, a random deletion strategy is run, where the removed nodes are selected randomly, and missing modes are recovered by substituting the nearest neighbour in terms of subspace distance.

In \Cref{fig:storage_results}, the MSE averaged across all recovered components is plotted for the three methods: the ridge  compression algorithm, $k$-medoids clustering and random deletion. The plot shows that applying compression recursively allows recovery of missing ridge subspaces with greater accuracy than the other methods up to approximately 4700 components, which covers almost all of the nodes. 

In \Cref{fig:Cp_profiles} and \Cref{fig:contour}, the $C_p$ profile on the surface of the airfoil and the entire flow field are compared for two cases: full CFD results and the reconstruction after removal of 3000 nodes using the compression algorithm respectively. The plots are for an airfoil geometry which was not used in the computation of the embedded ridge approximation. It can be seen that the pressure field is approximated well. Near the leading edge where large pressure variations are present, the pressure is also well-estimated. \Cref{fig:mesh} shows the locations of the removed nodes at different levels of compression. Nodes at the far-field are prioritised for removal, and, at high compression levels, nodes near the leading and trailing edges tend to be retained.

\begin{figure}
\centering
\includegraphics[width=0.6\linewidth]{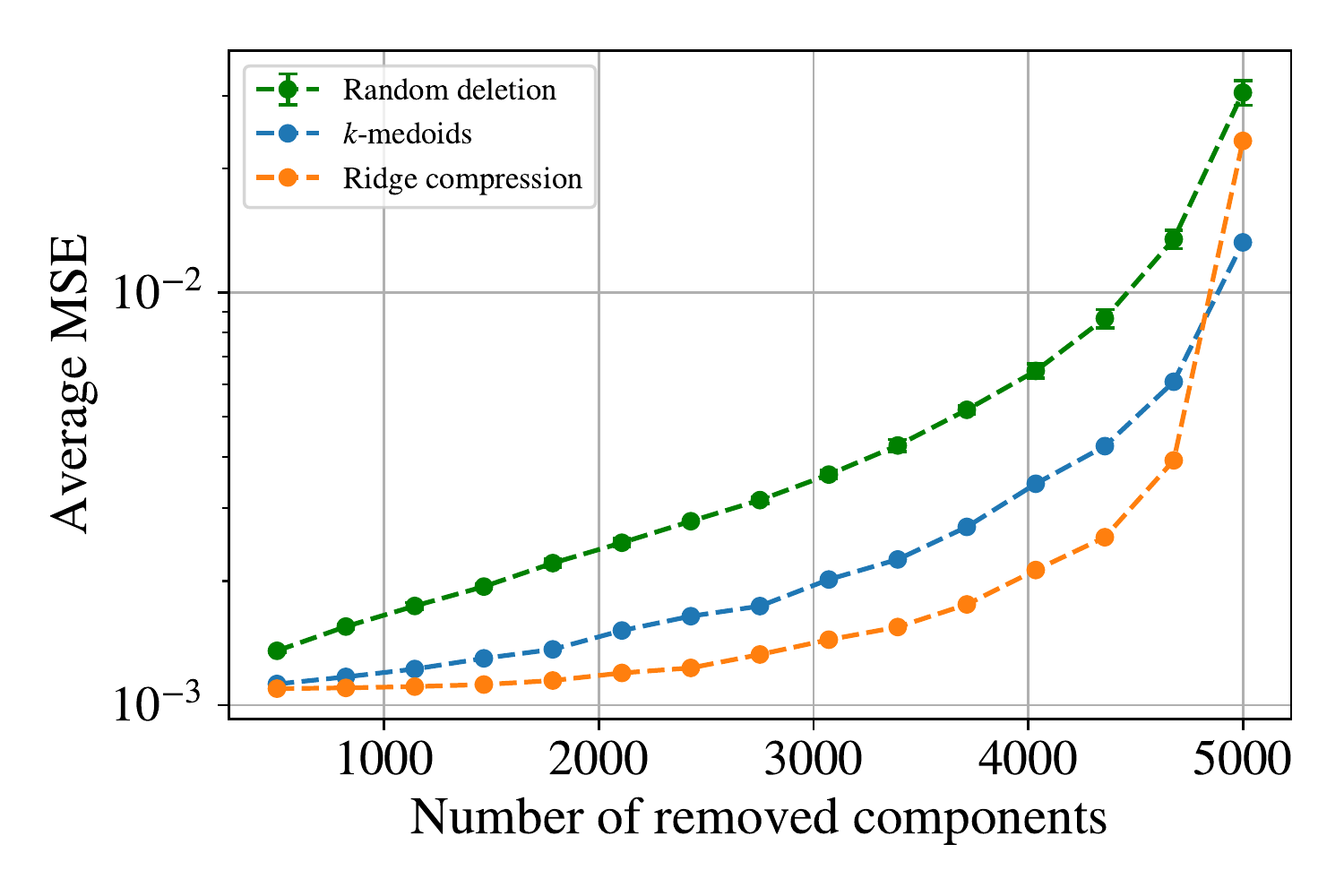}
\caption{Average MSE after removing various numbers of field components using the ridge compression algorithm (\Cref{algo_ave}) applied recursively with a stride $S=500$, $k$-medoids clustering (\Cref{alg:k-medoids}) and random deletion.}
\label{fig:storage_results}
\end{figure}

\begin{figure}
\centering
\includegraphics[width=0.6\linewidth]{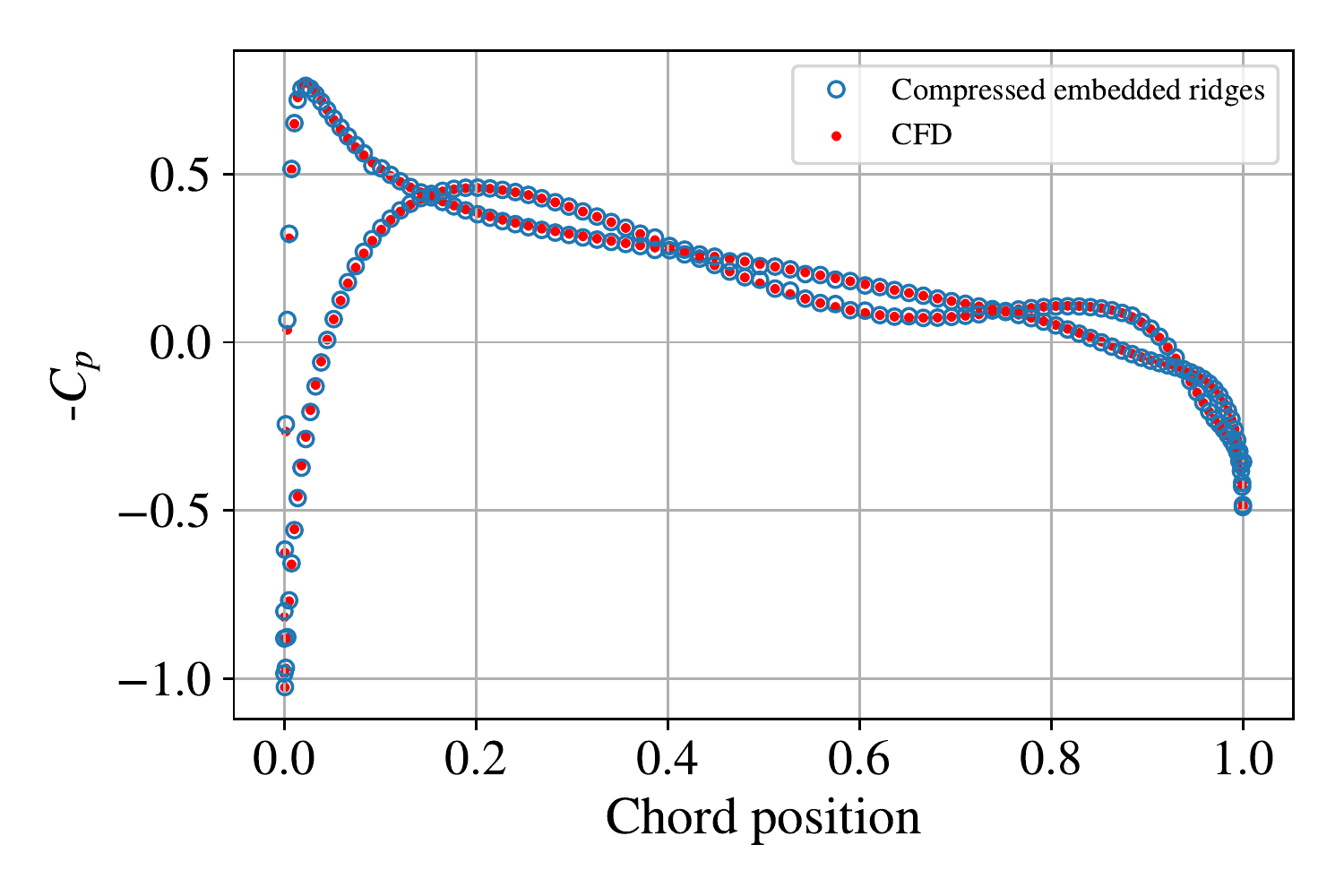}
\caption{Comparing the $C_p$ profile on the surface of the airfoil before and after removing 3000 nodes with ridge compression using an embedded ridge approximation formed from 400 observations.}
\label{fig:Cp_profiles}
\end{figure}

\begin{figure}
\centering
\includegraphics[width=.9\linewidth]{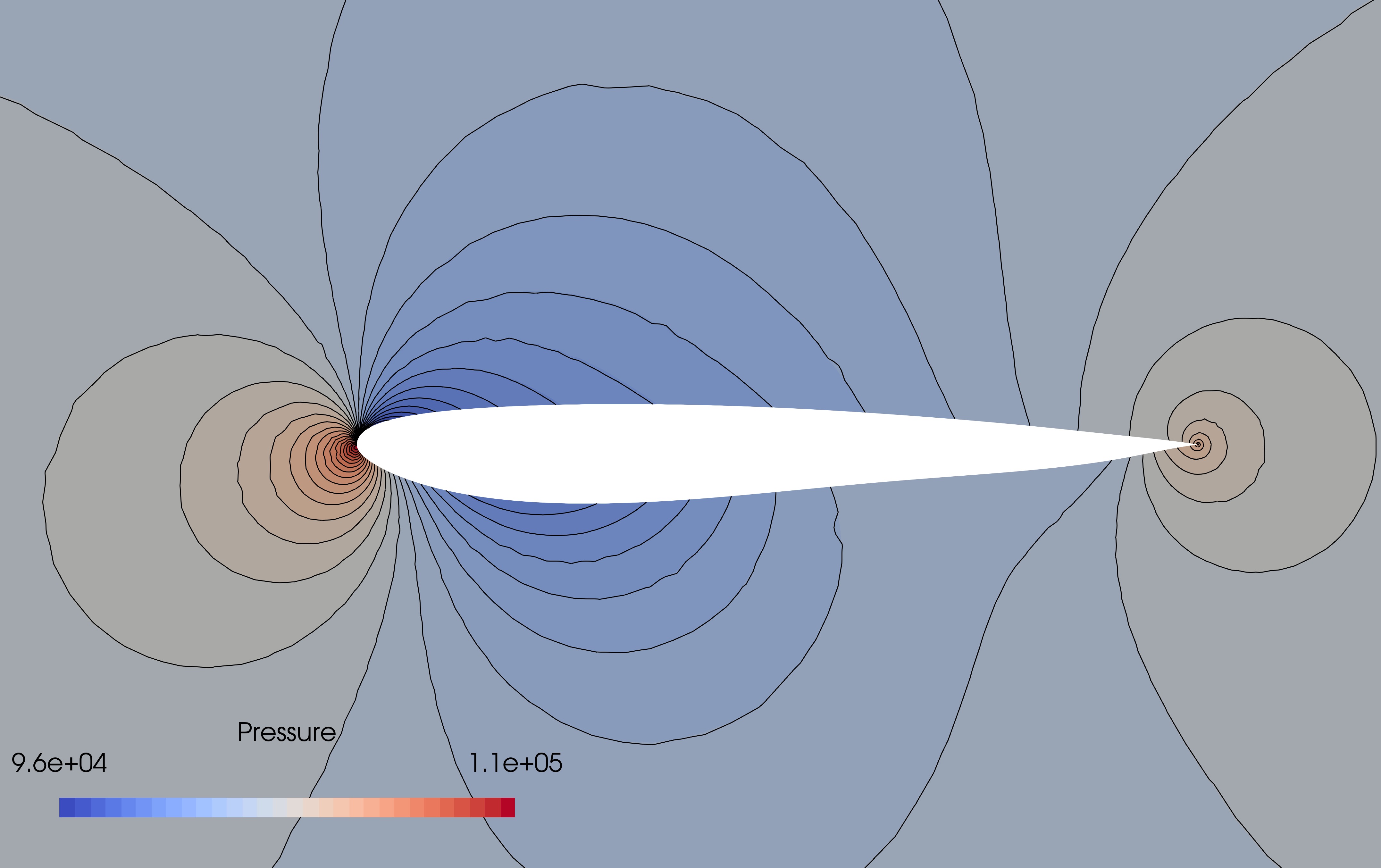}
\caption{Comparison of pressure contours for the flow around a deformed airfoil. Black isolines indicate the estimated flow field using embedded ridge approximation with 400 observations after removing 3000 nodes with ridge compression; colour contours indicate the CFD result.}
\label{fig:contour}
\end{figure}

\begin{figure}
\begin{center}
\begin{minipage}[b]{0.475\linewidth}
\includegraphics[width=1\linewidth]{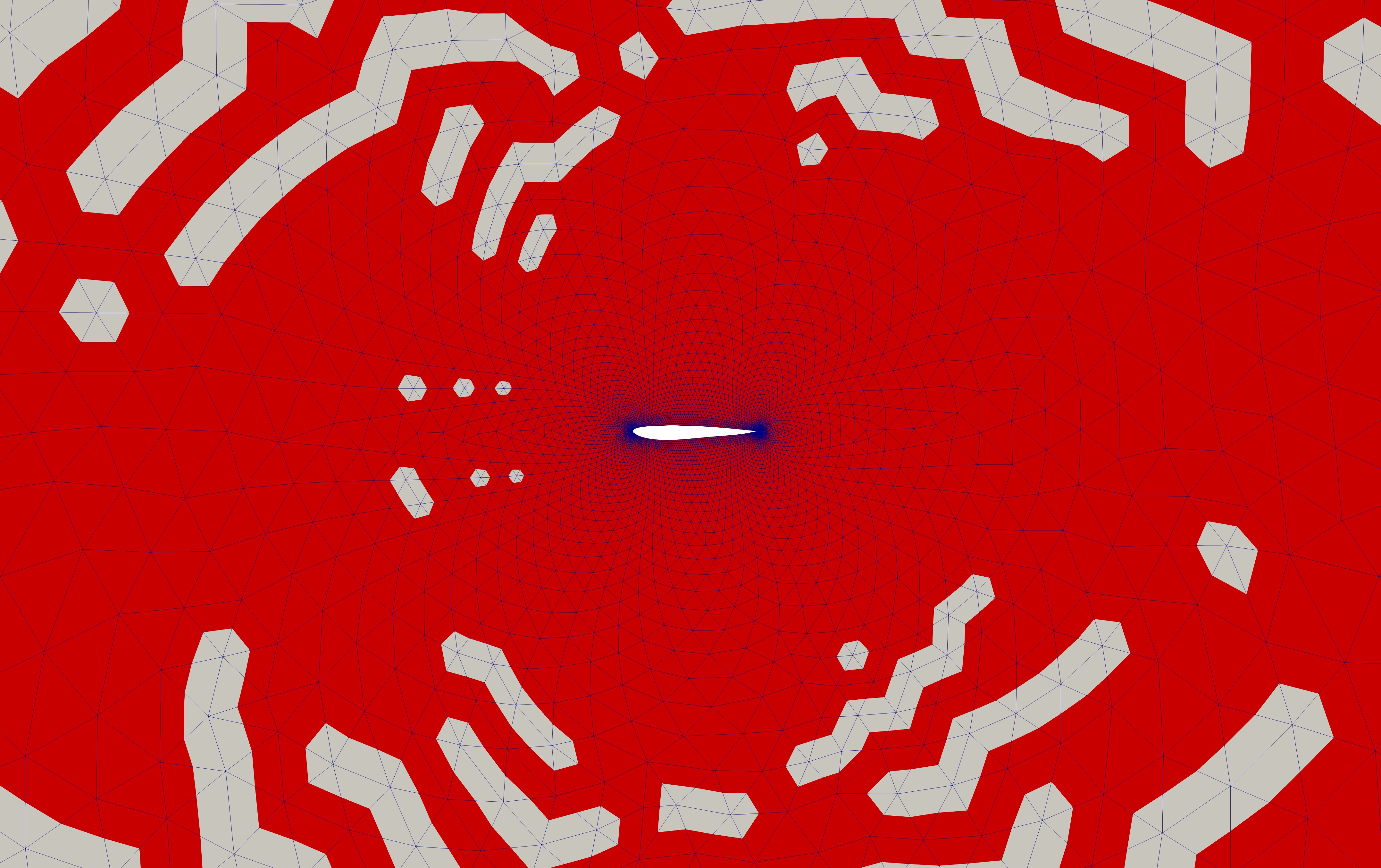}
\end{minipage}
\begin{minipage}[b]{0.475\linewidth}
\includegraphics[width=1\linewidth]{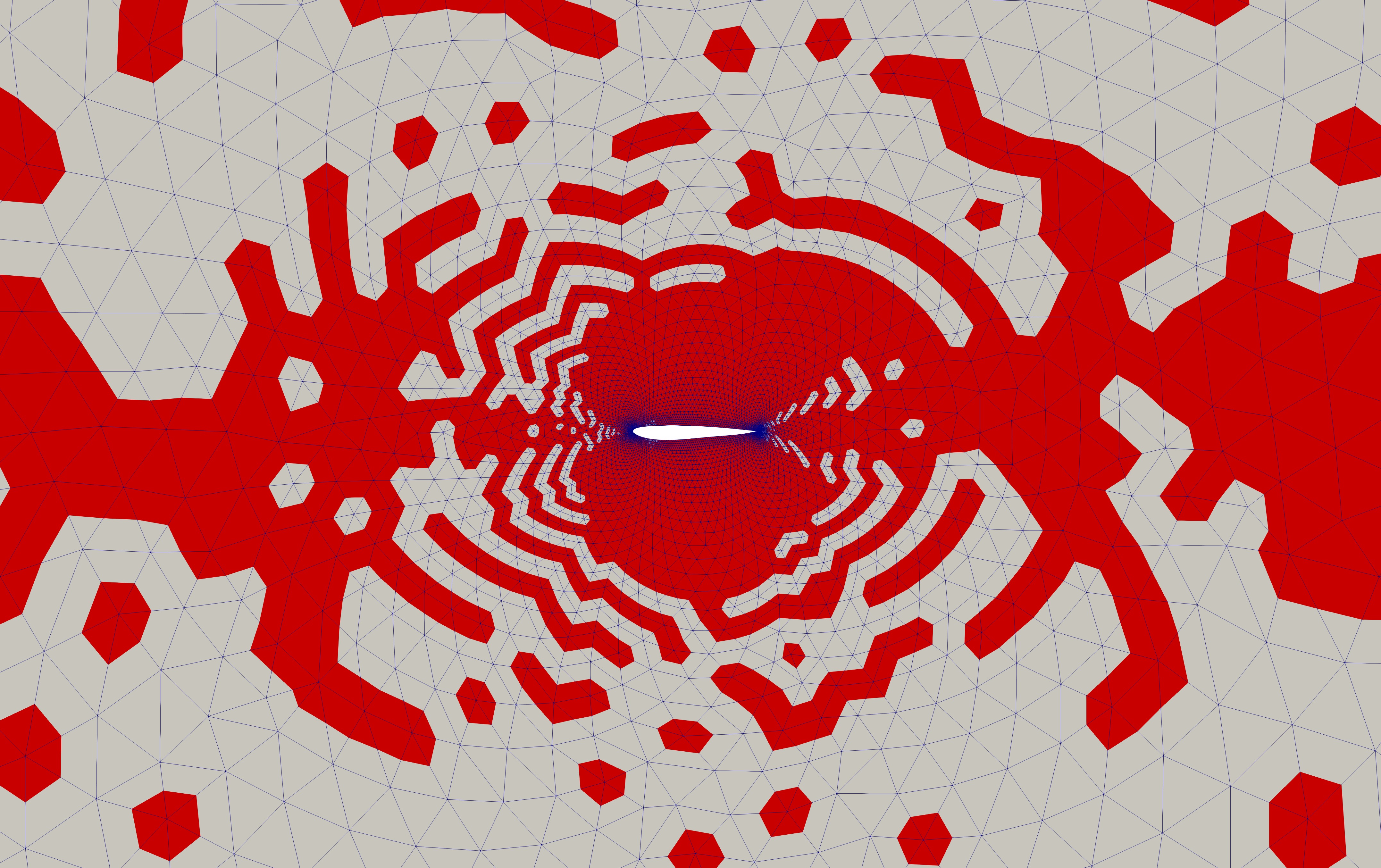}
\end{minipage}
\begin{minipage}[b]{0.475\linewidth}
\includegraphics[width=1\linewidth]{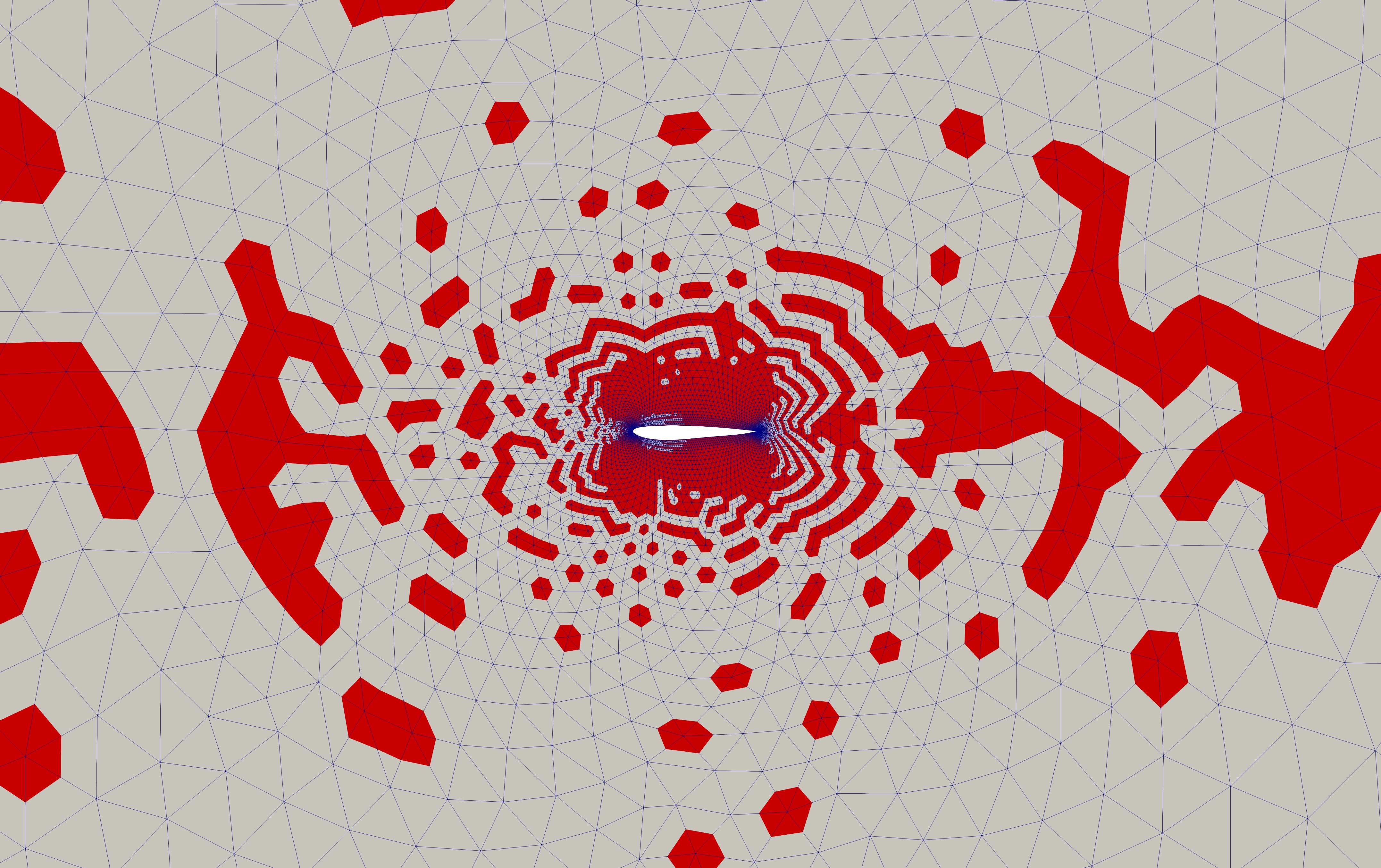}
\end{minipage}
\begin{minipage}[b]{0.475\linewidth}
\includegraphics[width=1\linewidth]{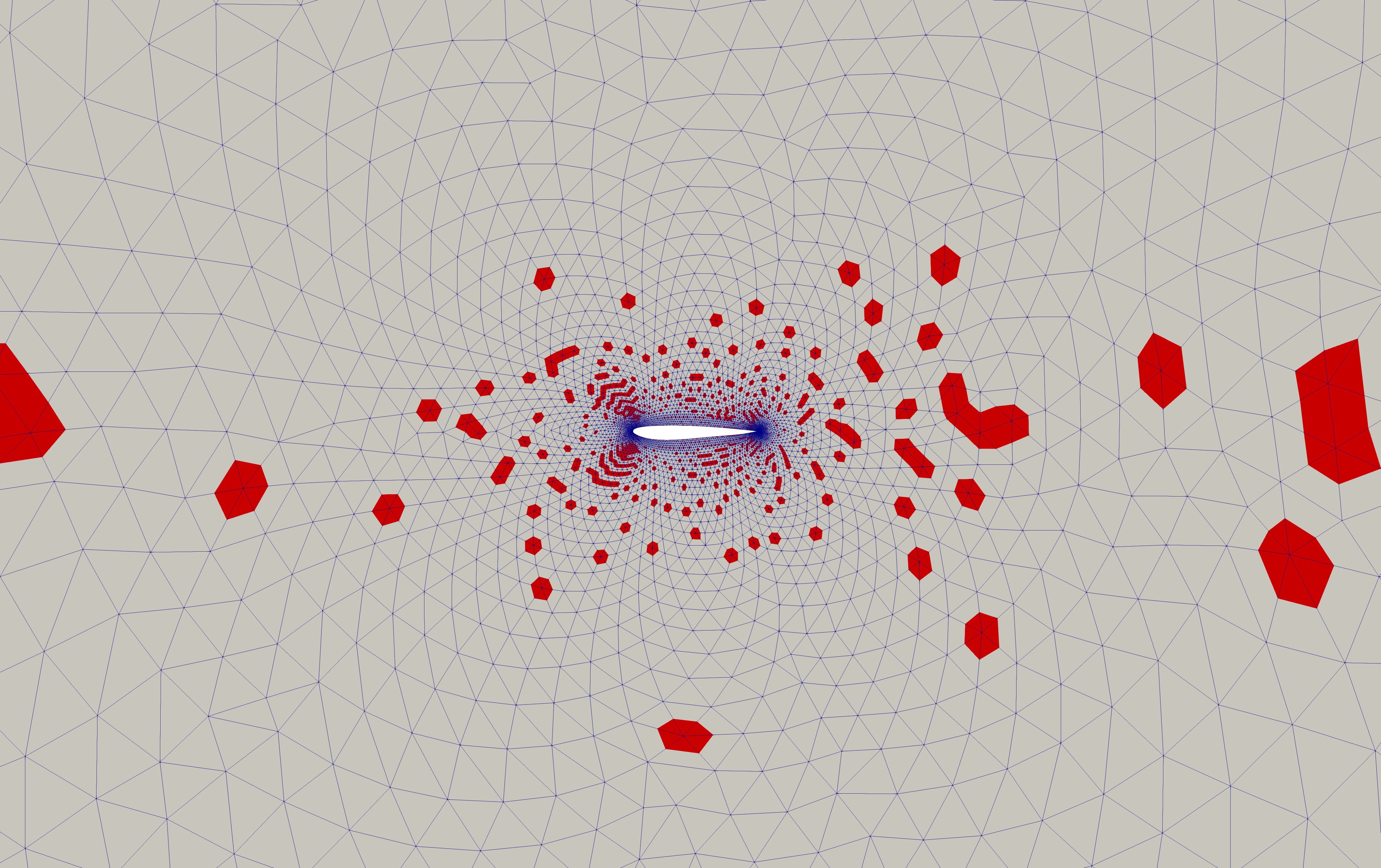}
\end{minipage}
\caption{Ridge compression scheme when 500 (top left), 1500 (top right), 2500 (bottom left) and 4300 (bottom right) nodes are removed. Red nodes are retained nodes, while grey ones are removed.}\label{fig:mesh}
\end{center}
\end{figure}

\section{Conclusions} 
In this paper we introduce the notion of constructing ridge approximations over localised scalar fields, aimed at improved simulation-centric dimension reduction. Our ideas are born from a simple observation: in many PDE-based models, the scalar field at a certain node is only weakly influenced by far-field perturbations. It is more likely to be governed by locally induced perturbations---caused by variations in local boundary conditions or geometry. By interpreting global scalar qois as integrals of these scalar-field quantities, we hypothesise and demonstrate that constructing ridge approximations over individual scalar field nodes instead and then integrating them can reduce the number of computational evaluations required. 

\subsection*{Acknowledgements}
CYW acknowledges financial support from the Cambridge Trust, Jesus College, Cambridge, and the Alan Turing Institute. PS was funded through a Rolls-Royce postdoctoral fellowship. The authors would like to thank the anonymous reviewer for useful feedback, which helped improve the manuscript.

\appendix
\renewcommand{\thesection}{\Alph{section}}

\section{Proof of \Cref{thm:sample_bound}} \label{sec:sample_bound_proof}
By the matrix Bernstein inequality, it can be shown that \eqref{equ:M_cond} implies that \cite[Remark 6.5]{tropp2012user-friendly}
\begin{equation}
\myEx\norm{\frac{1}{M} \sum_{m=1}^M \nabla g_{i}^{(m)} \nabla g_{j}^{(m)T} - \myEx[\nabla g_i \nabla g_j^T]}{2} \leq (\epsilon + \epsilon^2) \norm{\myEx[\nabla g_i \nabla g_j^T]}{2},
\end{equation}
We can then write
\begin{align*}
\myEx\norm{\widehat{\mC}(h) - \mC(h)}{2} &= \myEx\norm{\frac{1}{M} \sum_{m=1}^M \sum_{ij} \omega_i \omega_j \widehat{\mW}_{i} \nabla g_{i}^{(m)} \nabla g_{j}^{(m)T} \widehat{\mW}_{j}^T - \sum_{ij} \omega_i \omega_j \mW_i \myEx[\nabla g_i \nabla g_j^T] \mW_j^T}{2}\\
&= \myEx\norm{\sum_{ij} \omega_i \omega_j \mE_{ij}}{2}\\
&\leq \sum_{ij} |\omega_i \omega_j| \myEx\norm{\mE_{ij}}{2},
\end{align*}
where 
\begin{align*}
\mE_{ij} &= \frac{1}{M} \sum_{m=1}^M \widehat{\mW}_{i} \nabla g_{i}^{(m)} \nabla g_{j}^{(m)T} \widehat{\mW}_{j}^T - \mW_i \myEx[\nabla g_i \nabla g_j^T] \mW_j^T\\
&= \frac{1}{M} \sum_{m=1}^M \left( \widehat{\mW}_{i} \nabla g_{i}^{(m)} \nabla g_{j}^{(m)T} \widehat{\mW}_{j}^T - \mW_{i} \nabla g_{i}^{(m)} \nabla g_{j}^{(m)T} \widehat{\mW}_{j}^T  \right)\\
&\quad+ \frac{1}{M} \sum_{m=1}^M \left( \mW_{i} \nabla g_{i}^{(m)} \nabla g_{j}^{(m)T} \widehat{\mW}_{j}^T - \mW_{i} \nabla g_{i}^{(m)} \nabla g_{j}^{(m)T} \mW_{j}^T \right) \\
&\quad+ \frac{1}{M} \sum_{m=1}^M \left( \mW_{i} \nabla g_{i}^{(m)} \nabla g_{j}^{(m)T} \mW_{j}^T \right) - \mW_i \myEx[\nabla g_i \nabla g_j^T] \mW_j^T\\
&=  \left(\widehat{\mW}_{i} - \mW_i\right)\left( \frac{1}{M} \sum_{m=1}^M \nabla g_{i}^{(m)} \nabla g_{j}^{(m)T}\right) \widehat{\mW}_{j}^T\\
&\quad+ \mW_i \left( \frac{1}{M} \sum_{m=1}^M \nabla g_{i}^{(m)} \nabla g_{j}^{(m)T}\right) \left(\widehat{\mW}_{j}^T - \mW_j^T\right)\\
&\quad+ \mW_i \left(\frac{1}{M} \sum_{m=1}^M \nabla g_{i}^{(m)} \nabla g_{j}^{(m)T} - \myEx[\nabla g_i \nabla g_j^T] \right)\mW_j^T.
\end{align*}
Note that
\begin{equation}
\myEx\norm{\frac{1}{M} \sum_{m=1}^M \nabla g_{i}^{(m)} \nabla g_{j}^{(m)T}}{2} \leq \frac{1}{M} \sum_{m=1}^M \myEx\norm{\nabla g_{i}^{(m)} \nabla g_{j}^{(m)T}}{2} \leq L^2
\end{equation}
because $\nabla g_i^{(m)}$ are identically distributed copies of $\nabla g_i$ and whose norms are upper bounded by $L$. Using the triangle inequality and sub-multiplicativity of the norm, we get
\begin{equation}
\sum_{ij} |\omega_i \omega_j| \myEx\norm{\mE_{ij}}{2} \leq C\left(2\eta_M L^2 + \left(\epsilon + \epsilon^2\right)L^2\right)
\end{equation}
from which the theorem follows.

\section{Proof of \Cref{thm:subspace_perturb}}
\label{sec:subspace_proof}
First, we prove a lemma establishing a connection between perturbation of subspaces and a perturbation in the associated basis matrices.

\begin{lemma} \label{lem:inner_prod}
Let $\mathcal{S}$ be an $r$-dimensional subspace of $\myRe^d$, and $\widetilde{\mathcal{S}}$ be an equidimensional perturbation of $\mathcal{S}$. Then, $dist(\mathcal{S}, \widetilde{\mathcal{S}})$ is bounded above if and only if there exists $\mW, \widetilde{\mW} \in \myRe^{d\times r}$ with orthonormal columns such that $\vw_i^T \widetilde{\vw}_i$ is bounded below for all $i=1,...,r$, where $\vw_i$ is the $i$-th column of $\mW_i$ (and similarly for perturbed quantities).
\end{lemma}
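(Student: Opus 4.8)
The plan is to route everything through the \emph{principal angles} $0 \le \theta_1 \le \dots \le \theta_r \le \pi/2$ between $\mathcal{S}$ and $\widetilde{\mathcal{S}}$, since these simultaneously control the subspace distance and supply a canonical choice of bases. First I would fix arbitrary orthonormal bases $\mW_0, \widetilde{\mW}_0 \in \myRe^{d\times r}$ of $\mathcal{S}$ and $\widetilde{\mathcal{S}}$ and take the singular value decomposition $\mW_0^T\widetilde{\mW}_0 = \mP\,\text{diag}(\cos\theta_1,\dots,\cos\theta_r)\,\mQ^T$; the singular values lie in $[0,1]$ (both factors have orthonormal columns), so each is the cosine of a well-defined principal angle. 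Setting $\mW := \mW_0\mP$ and $\widetilde{\mW} := \widetilde{\mW}_0\mQ$ gives orthonormal bases of the \emph{same} two subspaces with $\mW^T\widetilde{\mW} = \text{diag}(\cos\theta_1,\dots,\cos\theta_r)$; hence these \emph{principal vectors} satisfy $\vw_i^T\widetilde{\vw}_i = \cos\theta_i$ together with the orthogonality $\vw_i^T\widetilde{\vw}_j = 0$ for $i\neq j$. This construction is exactly the basis whose existence the lemma asserts.

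The second ingredient is the identity $dist(\mathcal{S}, \widetilde{\mathcal{S}}) = \sin\theta_r$, where $\theta_r$ is the largest principal angle. I would establish it by computing the spectrum of the projector difference $\mW\mW^T - \widetilde{\mW}\widetilde{\mW}^T$: restricting to the span of the principal vectors and block-diagonalising into $2\times 2$ (and trivial) blocks using $\mW^T\widetilde{\mW} = \text{diag}(\cos\theta_i)$, one finds its nonzero eigenvalues are precisely $\pm\sin\theta_i$, so its spectral norm is $\norm{\mW\mW^T - \widetilde{\mW}\widetilde{\mW}^T}{2} = \sin\theta_r$. Since the left-hand side is basis-independent, this is consistent with the definition \eqref{equ:subspace_distance}. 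This is the standard $\sin\Theta$ characterisation and could alternatively be cited from Golub and Van Loan \cite{golub2013matrix}.

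With both pieces in place the equivalence is immediate and, crucially, \emph{monotone}: for the principal vectors, $\min_{1\le i\le r}\vw_i^T\widetilde{\vw}_i = \cos\theta_r = \sqrt{1 - \sin^2\theta_r} = \sqrt{1 - dist(\mathcal{S},\widetilde{\mathcal{S}})^2}$. Consequently the bound $dist(\mathcal{S},\widetilde{\mathcal{S}}) \le \sin\theta_r$ holds if and only if $\vw_i^T\widetilde{\vw}_i = \cos\theta_i \ge \cos\theta_r$ for every $i$. The forward direction is the one \Cref{thm:subspace_perturb} invokes: a small distance forces every diagonal inner product to be large, whence $\norm{\widetilde{\vw}_i - \vw_i}{2}^2 = 2 - 2\vw_i^T\widetilde{\vw}_i \le 2 - 2\cos\theta_r$, which is the per-column perturbation estimate feeding the mean-squared-error bound.

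The step I expect to carry the real weight is the backward direction, because it is only clean for the \emph{canonical} bases and fails for arbitrary ones. For instance, with $\mathcal{S} = \text{span}(\ve_1,\ve_2)$ and $\widetilde{\mathcal{S}} = \text{span}(\ve_1,\ve_3)$ one can choose non-principal orthonormal bases whose diagonal inner products are all $1/2$ while $dist = 1$; so large diagonal inner products in an arbitrary basis do not by themselves bound the distance. The resolution is that the quantity governing the distance is $\min_i\vw_i^T\widetilde{\vw}_i$ evaluated on the principal vectors, equivalently the smallest singular value of $\mW_0^T\widetilde{\mW}_0$. I would therefore phrase the argument so that the witnessing bases are precisely the principal vectors produced in the first step, making the off-diagonal orthogonality $\vw_i^T\widetilde{\vw}_j = 0$ the load-bearing property of the construction.
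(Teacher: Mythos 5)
Your proposal is correct, and on the forward direction---the only one \Cref{thm:subspace_perturb} actually invokes---it is essentially the paper's own proof: both pass to principal vectors, note $\vw_i^T\widetilde{\vw}_i=\cos\theta_i\ge\cos\theta_{\max}$, and convert via $\cos\theta_{\max}=\sqrt{1-dist(\mathcal{S},\widetilde{\mathcal{S}})^2}$ (the paper cites Golub and Van Loan for this identity and simply asserts the existence of principal vectors; your SVD construction $\mW=\mW_0\mP$, $\widetilde{\mW}=\widetilde{\mW}_0\mQ$ makes that existence explicit, which is a small improvement). The two proofs part ways on the backward direction. The paper keeps the witnessing bases \emph{arbitrary}: it bounds the projector difference by a triangle inequality over rank-one differences, $\norm{\mW\mW^T-\widetilde{\mW}\widetilde{\mW}^T}{2}\le\sum_{i=1}^r\norm{\vw_i\vw_i^T-\widetilde{\vw}_i\widetilde{\vw}_i^T}{2}=\sum_{i=1}^r\sqrt{1-\left(\vw_i^T\widetilde{\vw}_i\right)^2}\le r\sin(\theta_r)$, so the literal existential statement of the lemma survives at the cost of a factor $r$. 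You instead restrict the witnesses to the principal bases, which keeps the constant sharp, and your counterexample ($\mathcal{S}=\mathrm{span}(\ve_1,\ve_2)$, $\widetilde{\mathcal{S}}=\mathrm{span}(\ve_1,\ve_3)$, all diagonal inner products $1/2$, $dist=1$) is a genuinely useful observation: it shows the constant-$1$ bound cannot hold for arbitrary bases, i.e., the paper's factor-$r$ loss is not an artifact of its proof. The one claim you should soften is that the backward direction ``fails'' for arbitrary bases: it fails only with the sharp constant, while qualitative boundedness---which is all the loosely stated lemma asserts---still holds via the paper's rank-one argument (in your example, $1\le\sqrt{3}$). So the paper's proof covers the lemma exactly as written, whereas yours proves the cleaner monotone equivalence $dist\le\sin\theta \Leftrightarrow \min_i\vw_i^T\widetilde{\vw}_i\ge\cos\theta$ for the canonical bases; for the downstream use in \Cref{thm:subspace_perturb} either version suffices, since only the forward implication is needed there.
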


\begin{proof}
($\Rightarrow$) Choose $\{\vw_1,...,\vw_r\}$ and $\{\widetilde{\vw}_1,...,\widetilde{\vw}_r\}$ to be a set of principal vectors of $\mathcal{S}$ and $\widetilde{\mathcal{S}}$ respectively. Then, by construction, we have 
\begin{equation}
\vw_i^T \widetilde{\vw}_i = \cos(\theta_i),
\end{equation}
where $\theta_i$ is the $i$-th principal angle, with $0\leq \theta_1 \leq ... \leq \theta_r \leq \pi/2$. Thus, $\vw_i^T \widetilde{\vw}_i$ is bounded below by $\cos(\theta_r)$, the cosine of the largest principal angle. The result follows from the fact that $\cos(\theta_r) = \sqrt{1-dist(\mathcal{S}, \widetilde{\mathcal{S}})^2}$ \cite[Section 6.4.3]{golub2013matrix}.

($\Leftarrow$) Note that 
\begin{equation}
dist(\mathcal{S}, \widetilde{\mathcal{S}}) = \norm{\mW\mW^T - \widetilde{\mW}\widetilde{\mW}^T}{2},
\end{equation}
for any $\mW, \widetilde{\mW} \in \myRe^{d\times r}$ whose columns define orthonormal bases for $\mathcal{S}$ and $\widetilde{\mathcal{S}}$ respectively. That is, the distance does not depend on the basis chosen for the orthogonal projector. Then, we can write
\begin{align}
\begin{split}
\norm{\mW\mW^T - \widetilde{\mW}\widetilde{\mW}^T}{2} &= \norm{\sum_{i=1}^r \vw_i\vw_i^T - \widetilde{\vw}_i\widetilde{\vw}_i^T}{2}\\
&\leq \sum_{i=1}^r \norm{\vw_i\vw_i^T - \widetilde{\vw}_i\widetilde{\vw}_i^T}{2}\\
&= \sum_{i=1}^r \sqrt{1-\left(\vw_i^T \widetilde{\vw_i}\right)^2}.\\
\end{split}
\end{align}
Hence, if $\vw_i^T \widetilde{\vw}_i \geq \cos(\theta_r)$ then
\begin{equation}
dist(\mathcal{S}, \widetilde{\mathcal{S}}) \leq \sum_{i=1}^r \sqrt{1-\left(\vw^T_i \widetilde{\vw}_i\right)^2} \leq \sum_{i=1}^r \sqrt{1-\cos^2(\theta_r)} = r\sin(\theta_r).
\end{equation}
\end{proof}

Now, applying the Cauchy-Schwarz inequality to $\left(\vx^T (\widetilde{\mW} - \mW) \nabla_\vu g(\mW^T \vx)\right)^2$ in \eqref{eq:mse} yields the following:
\begin{align}
\begin{split} \label{equ:bound1}
\epsilon &\leq \myEx\left[\left(\nabla_\vu g^T \nabla_\vu g \right) \left(\vx^T (\widetilde{\mW} - \mW)(\widetilde{\mW} - \mW)^T \vx\right)\right]\\ 
&\leq G^2 \underbrace{\myEx\left[\vx^T (\widetilde{\mW} - \mW)(\widetilde{\mW} - \mW)^T \vx\right]}_{:= \eta}.
\end{split}
\end{align}
Let $\mA = \widetilde{\mW} - \mW$, $\vy = \mA^T \vx$ and $\va_i$ be the $i$-th column of $\mA$. Then,
\begin{equation} \label{equ:eta}
\eta = \myEx\left[\vy^T \vy \right] = \sum_{i=1}^r \va_i^T \myEx\left[\vx \vx^T\right] \va_i = \sigma_x^2 \sum_{i=1}^r \va_i^T \va_i.
\end{equation}
However, we have $\va_i = \widetilde{\vw}_i - \vw_i$. So,
\begin{align}
\begin{split} \label{equ:ata}
\va_i^T \va_i &= (\widetilde{\vw}_i - \vw_i)^T (\widetilde{\vw}_i - \vw_i)\\
&= 2-2\widetilde{\vw}_i^T \vw_i,
\end{split}
\end{align}
where we have used the fact that $\widetilde{\vw}_i^T \widetilde{\vw}_i = \vw_i^T \vw_i = 1$. So, substituting \eqref{equ:eta} and \eqref{equ:ata} into \eqref{equ:bound1}, we have:
\begin{equation} \label{equ:full_bound}
\epsilon \leq G^2 \sigma_x^2 \sum_{i=1}^r \left(2-2\widetilde{\vw}_i^T \vw_i\right).
\end{equation}
Applying the sufficient part of \Cref{lem:inner_prod} completes the proof.

\section{Global linear models} \label{sec:lin_models} The coefficients of a global linear model of a qoi give a rough estimation of the leading dimension-reducing subspace direction. If the qoi varies largely linearly in the input domain, and it is known that the dimension-reducing subspace is one-dimensional, this heuristic will find the required subspace at a low cost. This method is described with Algorithm 1.3 in \cite{constantine2015active}, which we reproduce below.
\begin{enumerate}
\item Draw input/output pairs $\left(\vx^{(m)}, f\left(\vx^{(m)}\right)\right)_{m=1}^M$ where $M = \alpha d$ with a certain oversampling $\alpha > 1$.
\item Solve the following least squares problem
\begin{equation}
\text{minimise}_{c,\vw} \quad \norm{\mX \vw + c - \vy}{2},
\end{equation}
where the $m$-th row of $\mX$ is $\vx^{(m)}$ and $y_m = f(\vx^{(m)})$.
\item Take the leading ridge direction to be $\vw / \norm{\vw}{2}$.
\end{enumerate}

\section{MAVE} \label{sec:MAVE} The Minimum Average Variance Estimation (MAVE) method for sufficient dimension reduction is proposed by Xia et al. \cite{xia2002adaptive}. Given input/output pairs $(\vx^{(m)}, y^{(m)})_{m=1}^M$, it aims to find the dimension-reducing subspace spanned by columns of $\mW$, which is the solution to the following optimisation over matrices with orthogonal columns:
\begin{align} \label{eqn:MAVE}
\begin{split}
\text{minimise}_{\mW} \qquad &\mathbb{E}\left[\left(y - \mathbb{E}\left[y \, | \, \mW^T \vx\right]\right)^2\right]\\
\text{subject to} \qquad &\mW^T \mW = \mI.
\end{split}
\end{align}
The first-order Taylor expansion about an input $\vx^{(0)}$ of the expectation within the parentheses is
\begin{equation}
\mathbb{E}\left[y^{(i)} \, | \, \mW^T \vx^{(i)}\right] \approx a_0 + \vb_0^T \mW^T(\vx^{(i)} - \vx^{(0)}).
\end{equation}
Hence, the MAVE procedure minimises the following approximation to \eqref{eqn:MAVE} as an alternating weighted least squares problem:
\begin{align} \label{eqn:alt_MAVE}
\begin{split}
\text{minimise}_{a_j, \vb_j, \mW} \qquad &\sum_{i=1}^M \sum_{j=1}^M \left[y^{(i)} - a_j - \vb_j^T \mW^T(\vx^{(i)} - \vx^{(j)})\right]^2 \omega_{ij}\\
\text{subject to} \qquad &\mW^T \mW = \mI,
\end{split}
\end{align}
where the weights $\omega_{ij}$ are determined through a normalised kernel function $K_h$ evaluated at $(\vx_i, \vx_j)$, namely
\begin{equation}
\omega_{ij} = \frac{K_h\left(\mW^T(\vx^{(i)} - \vx^{(j)})\right)}{\sum_{k=1}^M K_h\left(\mW^T(\vx^{(k)} - \vx^{(j)})\right)}.
\end{equation}
The procedure iterates with
\begin{itemize}
\item fixing $a_j, \vb_j$ and optimizing with respect to $\mW$, and
\item fixing $\mW$ and optimizing with respect to $a_j, \vb_j$.
\end{itemize}
The minimiser for both steps can be expressed analytically, with details in, e.g., \cite[Ch.~11]{li2018sufficient}.

\bibliographystyle{elsarticle-num}
\bibliography{paper}

\end{document}